\begin{document}

\title[Some results related to the continuity problem]{Some results related to the continuity problem}
\author[Dieter Spreen]{
D\ls I\ls E\ls T\ls E\ls R\ns S\ls P\ls R\ls E\ls E\ls N$^1$%
\thanks{The research leading to these results has received funding from the People Programme (Marie Curie Actions) of the European Union's Seventh Framework Programme FP7/2007-2013/ under REA grant agreement no.\ PIRSES-GA-2011-294962-COMPUTAL.}\\
$^1$ Department of Mathematics, University of Siegen, 57068 Siegen, Germany
 and \addressbreak
Department of Decision Sciences, University of South Africa
 PO Box 392, \addressbreak 
 Pretoria 0003, South Africa. \addressbreak
Email: {\tt spreen@math.uni-siegen.de}
}

\date{\today}
\maketitle

\begin{abstract}
The continuity problem, i.e., the question whether effective maps between effectively given topological spaces are effectively continuous, is reconsidered. In earlier work it was shown that this is always the case, if the effective map also has a witness for noninclusion. The extra condition does not have an obvious topological interpretation. As is shown in the present paper,  it appears naturally where in the classical proof that sequentially continuous maps are continuous the Axiom of Choice is used. The question is therefore whether the witness condition appears in the general continuity theorem only for this reason, i.e., whether effective operators are effectively sequentially continuous. For two large classes of spaces covering all important applications it is shown that this is indeed the case. The general question, however, remains open.    

Spaces in this investigation are in general \emph{not} required to be Hausdorff. They only need to satisfy the weaker $T_0$ separation condition.
\end{abstract}

\section{Introduction}

Computations are usually required to end in finite time. Because of this only a \emph{finite} amout of information about the input can be used during a computation. Moreover, an output once written on the output tape cannot be changed anymore: given more information about the input, the machine can only extend what is already written on the output tape (Monotonicity).

These properties not only hold for functions on the natural numbers, but also for the computation of operators on such functions. A natural topology can be defined on such spaces with respect to which computable operators turn out to be (effectively) continuous.

If one restricts one's interest to functions which are computable and can therefore be presented by the programs computing them (or their codings), there is another way of specifying the computability of operators: an operator is \emph{effective} if it is tracked by a computable function on the code.

 The continuity problem is the question whether effective operators are the restrictions (to computable inputs) of (effectively) continuous operators. Obviously, both approaches are rather unconnected. Nevertheless for certain important cases positive solutions were presented: In the case of operators on the partial computable functions this is due to Myhill and Shepherdson (1955); in the case of the total computable functions to Kreisel, Lacombe and Shoenfield (1959). In the first case the result has been generalised to certain types of directed-complete partial orders with the Scott topology (cf.~e.g.\  \cite{ec76,st78,wd80}), in the other to separable metric spaces \cite{ce62,mo64}. These two types of spaces are quite different, not only topologically: they also offer different algorithmic techniques to use. As follows from an example by Friedberg (1958), effective operators are not continuous, in general.

The situation remained unclear for quite a while. Spreen and Young (1984) showed that for second-countable topological $T_0$ spaces effective maps are effectively continuous if they have a witness for noninclusion. The requirement says that if the image of a basic open set under the operator is not included in a given basic open set in its co-domain, then one needs be able to effectively produce a witness for this. Later, in \cite{sp98}, a mathematically more civilized framework for the derivation of this result was developed.

The condition seems natural when dealing with continuity. In the present note we will give even more evidence for its canonicity. In classical topology it is well known that for second-countable spaces sequentially continuous maps are continuous. The proof can be transferred into a constructive framework. There is however one step in which the classical proof uses the Axiom of Choice and the effective information needed here is exactly what is provided by the witness for noninclusion condition.

So, the question comes up whether effective operators are effectively sequentially continuous and the extra condition is only needed for the step from effective sequential continuity to effective continuity. We will show for a large class of spaces that effective operators are effectively sequentially continuous. To this end we require the spaces to come equipped with a set of canonical computable sequences which are such that  sequences can be \emph{stretched} by wait-and-see strategies and the operator taking convergent sequences to their limits is effective. In addition all basic open sets need be completely enumerable, uniformly in their index. All these conditions seem very natural, but as we will see, in particular the combination of wait-and-see strategies with the computation of limits has a strong impact on the topology. 

If we deal with spaces as the total computable functions or the computable real numbers, then a metric is at hand which allows putting stronger conditions on the convergence of sequences, e.g., we can prescribe its velocity. These conditions are important in order to be able to render the limit operator computable, however they are not compatible with wait-and-see strategies. Other algorithmic techniques like decision procedures are at hand instead. Also for spaces of this kind it is shown that effective operators are effectively sequentially continuous. However, we have not been able to present a uniform approach to the question whether effective operators are effectively sequentially continuous as we did in the case of effective continuity. It is even not clear whether this holds in general. A modification of Friedberg's example shows that effective operators are not sequentially continuous in general. But this still leaves open the possibility that they are effectively sequentially continuous as we are dealing with computable sequences only in this case.

As is well known, limits of point sequences in a $T_0$ space are not uniquely determined. In the joint paper \cite{sy84} we had to make a special assumption to handle this problem. Later, in \cite{sp98} we based our approach on filter convergence to get rid of it. In both cases we had to assume that one can effectively pass from a computable enumeration of the sequence elements and/or a filter base to the points they converge to. The relationship between both conditions will be studied as well.

The paper is organized as follows:  Section~\ref{basic} contains basic definitions. In Section~\ref{eff} notions and results from the theory of effective spaces are recalled. A new construction of an acceptable numbering is given. Important special cases of such spaces are considered in Section~\ref{speccase}.
The condition of a numbering having a limit algorithm and the existence of such numberings is discussed in Section~\ref{lim}. In Section~\ref{cont} the relationship between effective continuity notions of different strength is investigated, in particular the connection between effective continuity and effective sequential continuity. Finally, in Section~\ref{map}, the question of when an effective map is effectively sequentially continuous is examined.

\section{Basic definitions}\label{basic}

In what follows, let $\langle\;,\;\rangle:\omega^{2}\rightarrow\omega$ 
be a computable pairing function with corresponding  projections $\pi_1$
and $\pi_2$ such that $\pi_{i}(\langle a_{1},a_{2}\rangle )=a_{i}$. We
extend the pairing function to an $n$-tupel encoding in the usual way.
Let $P^{(n)}$ ($R^{(n)}$) denote the set of all $n$-ary partial
(total) computable functions, and let $W_{i}$ be the domain of the
$i$th partial computable function $\varphi_{i}$ with respect
to some G\"{o}del numbering $\varphi$. We let
$\varphi_{i}(a)\conv$ mean that the computation of
$\varphi_{i}(a)$  stops, $\varphi_{i}(a)\conv \in C$
that it stops with value in $C$, and
$\varphi_{i}(a)\conv_{n}$ that it stops within $n$
steps. In the opposite cases we write
$\varphi_{i}(a)\nconv$  and
$\varphi_{i}(a)\nconv_{n}$ respectively.  
Moreover, we write $\pfun{F}{X}{Y}$ to mean that $F$ is a partial function from set $X$ into set $Y$ with domain $\dom(F)$.

A {\em (partial) numbering\/} $\nu$ of a set $S$
is a partial map $\pfun{\nu}{\omega}{S}$ (onto). The value of $\nu$ at $n \in \dom(\nu)$ is denoted by $\nu_n$. If $s \in S$ and $n \in \dom(\nu)$ with $\nu_n = s$, then $n$ is said to be an \emph{index} of $s$.  Numberings $\nu$ with $\dom(\nu) = \omega$,  are called \emph{total}. Note that instead of numbering we also say \emph{indexing}. 
 
\begin{definition}\label{df-meq}
Let $\nu, \kappa$ be numberings of set $S$.
\begin{enumerate}

\item $\nu \le \kappa$, read $\nu$ is {\em reducible\/} to $\kappa$,
if there is some function $g \in P^{(1)}$ with $\dom(\nu)
\subseteq \dom(g)$, $g(\dom(\nu)) \subseteq \dom(\kappa)$, and $\nu_m =
\kappa_{g(m)}$, for all $m \in \dom(\nu)$.

\item $\nu \equiv \kappa$, read $\nu$ is {\em equivalent\/} to
$\kappa$, if $\nu \le \kappa$ and $\kappa \le \nu$.
\end{enumerate}
\end{definition}

A subset $X$ of $S$ is {\em completely enumerable}, if there is a computably enumerable set $A \subseteq \omega$ such that $\nu_i \in X$ if and only if $i \in A$, for all $i \in \dom(\nu)$.
$X$ is {\em enumerable}, if there is a computably enumerable set $E \subseteq \dom(\nu)$ such that $X = \set{\nu_i}{i \in E}$. 

Thus, $X$ is enumerable if we can enumerate a subset of the index set of $X$ which contains at least one index for every element of $X$, whereas $X$ is completely enumerable if we can enumerate all indices of elements of $X$ and perhaps some numbers which are not used as indices by  numbering $\nu$. 

\begin{definition}\label{def-fct-eff}
A map $\fun{F}{S}{T}$ between sets $S$ and $T$ with numberings $\nu$ and $\kappa$, respectively, is \emph{effective}, if there is a function $f \in P^{(1)}$ such that $f( i )\conv \in \dom(\kappa)$ and $F(\nu_i) = \kappa_{f(i)}$, for all $i \in \dom(\nu)$. Function $f$ is said to \emph{track} $F$ and any G\"odel number of $f$ is called \emph{index} of $F$.
\end{definition}

Note that the preimage of a completely enumerable set with respect to an effective map is completely enumerable again. 

A sequence $(y_a)_{a \in \omega}$ of elements of $S$ is \emph{computable} if there is some function $g \in R^{(1)}$ with $\range(g) \subseteq \dom\nu)$ so that $y_a = \nu_{g(a)}$, for all $a \in \omega$. Every G\"odel number of $g$ is called \emph{index} of $(y_a)_a$.
Let $\omega$ be enumerated by its identity. Then the  computable sequences in $S$ are the effective maps from $\omega$ to $S$. 

Note that the effectivity notions introduced so far depend on numbering $\nu$ (and/or numberings $\nu$ and $\kappa$ in the case of Definition~\ref{def-fct-eff}). In what follows we will fix certain numberings and consider them as being part of the effective structure we are considering. Therefore, we refrain from always denoting this dependency, in particular from using notation that would make it explicit.

\section{Effective spaces}\label{eff}

Let $\mathcal{T} = (T,\tau)$ be a countable topological $T_0$ space with a countable basis $\mathcal{B}$. As has been demonstrated by the author in a series of papers (Spreen 1995, 1996, 1998, 2001a, 2001b, 2010, 2014), topological spaces of this kind are well suited for effectivity considerations.

Assume further that $B$ is a total numbering of $\mathcal{B}$. In the applications we have in mind the basic open sets can be described in a finite way. $B$ is then obtained by encoding the finite descriptions. If we want to deal with the points and open sets of space $\mathcal{T}$ in an effective way,  the interplay between both should at least be such that we can effectively list the points of each basic open set, uniformly in its index.

\begin{definition}
Let $\mathcal{T} = (T, \tau)$ be a countable topological $T_0$ space with countable basis $\mathcal{B}$, and let $x$ and $B$ be numberings of $T$ and $\mathcal{B}$, respectively, such that $B$ is total. We say that $x$ is \emph{computable} if there is some computably enumerable set $L \subseteq \omega$ such that for all $i \in \dom(x)$ and all $n \in \omega$,
\[
\pair{i,n} \in L \Longleftrightarrow x_i \in B_n.
\]
\end{definition}
Thus, $x$ is computable if and only if all basic open sets $B_n$, are completely enumerable, uniformly in $n$.

We consider the numberings $B$ and $x$ as being part of the topological structure. 

As said, in the applications we have in mind basic open sets can be described in a finite way and the indexing $B$ is then obtained by an encoding of the finite descriptions. Moreover, in these cases there is a canonical relation between the (code numbers of the) finite descriptions which is stronger than the usual set inclusion between the described sets. This relation is computable enumerable, which is not true for set inclusion, in general. 
\begin{definition}\label{df-si}
Let $\prec_B$ be a transitive binary relation on $\omega$. We say that:
\begin{enumerate} 
\item\label{df-si-1} $\prec_B$ is a \emph{strong inclusion}, if for all $m, n \in \omega$, from $m \prec_B n$ it follows that $B_m \subseteq B_n$.

\item\label{df-si-2} $\mathcal{B}$ is a \emph{strong basis}, if $\prec_B$ is a strong inclusion and for all $z \in T$ and $m, n \in \omega$ with $z \in B_m \cap B_n$ there is some $a \in \omega$ such that $z \in B_a$, $a \prec_B m$ and $a \prec_B n$. 

\end{enumerate}
\end{definition} 

In what follows, we always assume that $\prec_B$ is a strong inclusion with respect to which $\mathcal{B}$ is a strong basis.

\begin{definition}
 Space $\cal T$ is {\em effective}, if the property of being a strong basis holds effectively, which means  that there exists a function $sb\in P^{(3)}$ such that for $i \in \dom(x)$  and $m$, $n\in \omega$  with $x_{i}\in B_{m}\cap B_{n}$, $sb(i,m,n)\mathclose\downarrow$, $x_{i}\in B_{sb(i,m,n)}$,  $sb(i,m,n)\prec_B m$, and $sb(i,m,n)\prec_B n$.
\end{definition}

\begin{lemma}[Spreen 1998]\label{lem-eff}
Let $x$ be computable and $\prec_B$ be computably enumerable. Then $\mathcal{T}$ is effective.
\end{lemma}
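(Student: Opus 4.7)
The plan is to turn the existential statement in the strong basis axiom into an effective search by showing that each of its three defining clauses is c.e.\ uniformly in the parameters. From the computability of $x$ I obtain, by definition, a c.e.\ set $L$ with $\pair{i,k} \in L \Longleftrightarrow x_i \in B_k$ for all $i \in \dom(x)$ and all $k$. By hypothesis $\prec_B$ is a c.e.\ binary relation on $\omega$. Hence I can fix a computable enumeration of $L$ and of (a standard coding of) $\prec_B$.

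Given input $(i,m,n)$, consider the set
\[
S_{i,m,n} \;=\; \set{a \in \omega}{\pair{i,a} \in L \text{ and } a \prec_B m \text{ and } a \prec_B n}.
\]
This set is c.e.\ uniformly in $(i,m,n)$: run the three enumerations in a dovetailed fashion and output $a$ as soon as $\pair{i,a}$ has appeared in $L$, and both $(a,m)$ and $(a,n)$ have appeared in $\prec_B$. By the $s$-$m$-$n$ theorem (or by explicit construction), the map sending $(i,m,n)$ to the first element found in such an enumeration is a partial computable function; let $sb(i,m,n)$ be this first element.

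It remains to verify the defining properties of effectiveness. Suppose $i \in \dom(x)$ and $x_i \in B_m \cap B_n$. Since $\mathcal B$ is a strong basis in the sense of Definition~\ref{df-si}(\ref{df-si-2}), there exists some $a \in \omega$ with $x_i \in B_a$, $a \prec_B m$, and $a \prec_B n$. For such $a$, computability of $x$ gives $\pair{i,a} \in L$, so $a \in S_{i,m,n}$; hence $S_{i,m,n}$ is nonempty and the dovetailed search halts. The value $sb(i,m,n)$ then lies in $S_{i,m,n}$, so by the defining clauses of $S_{i,m,n}$ together with $i \in \dom(x)$ and the definitions of $L$ and $\prec_B$, we obtain $x_i \in B_{sb(i,m,n)}$, $sb(i,m,n) \prec_B m$ and $sb(i,m,n) \prec_B n$, as required.

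There is no genuine obstacle here: the whole content of the lemma is the observation that the non-effective witness supplied by the strong basis axiom can be found effectively once (i) membership in basic open sets is c.e.\ uniformly (which is exactly computability of $x$) and (ii) the strong inclusion $\prec_B$ is c.e. All that is left is a routine dovetailing, together with the remark that we never need to know in advance whether $x_i \in B_m \cap B_n$ — if the hypothesis fails, the search is allowed to diverge, since $sb$ is only required to be a partial computable function.
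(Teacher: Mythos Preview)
Your argument is correct and is precisely the standard one: dovetail the c.e.\ conditions $\pair{i,a}\in L$, $a\prec_B m$, $a\prec_B n$ and output the first $a$ found; the strong basis axiom guarantees termination when $i\in\dom(x)$ and $x_i\in B_m\cap B_n$. The present paper does not actually include a proof of this lemma (it is quoted from Spreen 1998), but your proof is exactly the intended one and matches the argument in the cited source.
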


As is well known, each point $y$  of a $T_{0}$ space is uniquely determined by its neighbourhood filter $\mathcal{N}(y)$ and/or a base of
it.
\begin{definition}
Let $\cal H$ be a filter. A nonempty subset $\cal F$ of $\cal H$ is
called {\em strong base\/} of $\cal H$ if the following two
conditions hold:
\begin{enumerate}
\item \label{filt_filtered} For all $m$, $n \in\omega$ with $B_m$,
$B_n \in {\cal F}$ there is some index $a \in\omega$ such that $B_a
\in {\cal F}$, $a\prec_B m$, and $a\prec_B n$.

\item \label{filt_sub} For all $m \in\omega$ with $B_m \in {\cal H}$
there is some index $a\in\omega$ such that $B_a \in{\cal F}$ and $a
\prec_B m$.
\end{enumerate}
\end{definition}

If $x$ is computable, a strong base of basic open sets can effectively
be enumerated for each neighbourhood filter. Here, we are interested in enumerations that proceed in a normed way. 

\begin{definition}
An enumeration $( B_{f(a)} )_{a\in\omega}$  with $f:\omega\rightarrow\omega$  is said to be {\em normed\/} if $f$ is decreasing with respect to $\prec_B$. If $f$ is computable, it is also called {\em computable\/} and any G\"{o}del number of $f$ is said to be an {\em index\/} of it.
\end{definition}

In case $( B_{f(a)})_a$ is normed and enumerates a strong base of the neighbourhood filter of some point, we say it {\em converges\/} to that point. 

Recall here that because of the $T_0$ requirement every point is uniquely determined by a base of its neighbourhood filter. So, if a normed enumeration of basic open sets converges to a point, the point is uniquely determined by the enumeration. This is unlike the case of point sequences where limits need not be uniquely determined in general.

\begin{lemma}[Spreen 1998]\label{lm-normseq}
Let $\mathcal{T}$ be effective and $x$ be computable. Then there is a function $q\in R^{(1)}$ such that for each $i\in \dom(x)$, $q(i)$ 
is an  index of a normed computable enumeration of basic open sets converging to  $x_{i}$. 
\end{lemma}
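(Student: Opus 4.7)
The plan is to use the effective enumerability of the basic open neighbourhoods of each point $x_i$ (available from the computability of $x$) together with the effectiveness function $sb$ to build a $\prec_B$-decreasing enumeration of basic open neighbourhoods of $x_i$. Since $sb$ refines any two basic open neighbourhoods of a point into a strictly $\prec_B$-smaller common one, a simple recursion that at each step refines the current set against the next enumerated neighbourhood will produce a normed enumeration, and uniformity in $i$ will follow from the s-m-n theorem.

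Concretely, I would first use $L$ (the c.e.\ witness for computability of $x$) to extract a computable $h \in P^{(2)}$ such that, for every $i \in \dom(x)$, $h(i,\cdot)$ is total and its range equals $\{n : x_i \in B_n\}$ (with repetitions if this set is finite). Then set
\[
 f(i, 0) = h(i, 0), \qquad f(i, a+1) = sb\bigl(i, h(i, a+1), f(i, a)\bigr),
\]
and apply the s-m-n theorem to obtain $q \in R^{(1)}$ with $\varphi_{q(i)}(a) = f(i, a)$.

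To verify the claimed properties for $i \in \dom(x)$: induction on $a$ gives $x_i \in B_{f(i,a)}$, which both legitimates each call to $sb$ (since also $x_i \in B_{h(i,a+1)}$) and shows $\mathcal{F} := \{B_{f(i,a)} : a \in \omega\} \subseteq \mathcal{N}(x_i)$. The defining properties of $sb$ immediately yield $f(i,a+1) \prec_B f(i,a)$, so the enumeration is normed, and also $f(i,a+1) \prec_B h(i,a+1)$. For condition~2 of the strong base definition, if $B_m \in \mathcal{N}(x_i)$ then $m = h(i,k)$ for some $k$, and $f(i,k) \prec_B m$ holds when $k \geq 1$, while $f(i,1) \prec_B f(i,0) = m$ handles $k = 0$. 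For condition~1, indices $m, n$ with $B_m, B_n \in \mathcal{F}$ also enumerate basic open sets containing $x_i$, so $m = h(i, j')$ and $n = h(i, k')$ for some $j', k'$; taking $a = f(i, \max(j', k', 1))$ and chaining the relations $f(i, \ell+1) \prec_B f(i, \ell)$ and $f(i, \ell) \prec_B h(i, \ell)$ via transitivity of $\prec_B$ yields $a \prec_B m$ and $a \prec_B n$.

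The step I expect to require the most care is the verification of condition~1 of the strong base definition: it quantifies over \emph{all} indices $m, n$ of sets in $\mathcal{F}$, not only the particular indices $f(i, a)$ produced by the construction, so one must pull each such $m$ back through the enumeration $h(i,\cdot)$ before chasing the transitivity of $\prec_B$ along the recursion. This is bookkeeping rather than a genuine mathematical obstacle, but it is the place where the normedness of the enumeration (rather than merely being a sequence of basic neighbourhoods) is genuinely used.
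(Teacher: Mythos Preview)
Your argument is correct. The paper does not prove this lemma here (it is cited from Spreen~1998), so there is no in-text proof to compare against; your construction---enumerate the basic neighbourhoods of $x_i$ via the computability witness $L$ and then recursively refine with $sb$ to force a $\prec_B$-decreasing sequence that lies $\prec_B$-below every basic neighbourhood of $x_i$---is the standard one and exactly what the effectiveness hypothesis is designed to support. Your handling of condition~1 of the strong-base definition (pulling an arbitrary index $m$ with $B_m\in\mathcal{F}$ back through the enumeration $h(i,\cdot)$ before chasing transitivity) is the right way to deal with the fact that $\mathcal{F}$ is a set of \emph{sets}, not of indices.
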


We not only want  be able to generate normed recursive enumerations of basic open sets converging to a given point, but conversely, we need also be able to pass effectively from such enumerations to the point they converge to.

\begin{definition}
Let $x$ be a numbering of $T$. We say that:
\begin{enumerate}
\item $x$ {\em allows effective limit passing\/} if there is a function $pt\in P^{(1)}$ such that, if $m$ is an index of a  normed computable
enumeration of basic open sets  converging to a point $y\in T$, then $pt(m)\mathclose\downarrow \in \dom(x)$ and $x_{pt(m)} =
y$.

\item $x$ is {\em acceptable\/} if it allows effective limit passing and is computable.
\end{enumerate}
\end{definition}

\begin{lemma}[Spreen 1998]\label{lem-numb-red}
Let $x', x''$ be numberings of $T$.  Then the following three statements hold:
\begin{enumerate}
\item If $x'$ is computable and $x'' \le x'$, then $x''$ is computable as well.

\item If $x'$ allows effective limit passing and $x' \le x''$, then also $x''$ allows effective limit passing.

\item If $x'$ is computable, $\mathcal{T}$ effective with respect to $x'$, and $x''$ allows effective limit passing, then $x' \le x''$.
\end{enumerate}
\end{lemma}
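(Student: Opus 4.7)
The plan is to treat each of the three items by a direct composition of the witnesses provided by the hypotheses, checking only that the domain conditions work out.

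For (1), I would unfold the computability of $x'$: there is a c.e.\ set $L \subseteq \omega$ such that $\pair{i,n} \in L \iff x'_i \in B_n$ for every $i \in \dom(x')$ and $n \in \omega$. Let $g \in P^{(1)}$ witness $x'' \le x'$. Since for $m \in \dom(x'')$ we have $x''_m = x'_{g(m)}$ with $g(m) \in \dom(x')$, the equivalence $x''_m \in B_n \iff \pair{g(m),n} \in L$ holds. Hence the set $L'' = \set{\pair{m,n}}{\pair{g(m),n} \in L}$ is c.e.\ (as the preimage of a c.e.\ set under a partial computable function) and witnesses the computability of $x''$.

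For (2), let $pt' \in P^{(1)}$ witness effective limit passing for $x'$, and let $h \in P^{(1)}$ witness $x' \le x''$. Set $pt''(m) \simeq h(pt'(m))$. If $m$ indexes a normed computable enumeration of basic open sets converging to some $y \in T$, then by hypothesis $pt'(m)\conv \in \dom(x')$ with $x'_{pt'(m)} = y$, so $h(pt'(m))\conv \in \dom(x'')$ and $x''_{h(pt'(m))} = x'_{pt'(m)} = y$. Thus $pt''$ witnesses effective limit passing for $x''$.

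Part (3) is the only one that draws on the hypotheses beyond pure composition, and is therefore the crux, but still routine. Because $\mathcal{T}$ is effective with respect to $x'$ and $x'$ is computable, Lemma~\ref{lm-normseq} supplies a function $q \in R^{(1)}$ such that for every $i \in \dom(x')$, $q(i)$ is an index of a normed computable enumeration of basic open sets converging to $x'_i$. Let $pt'' \in P^{(1)}$ witness effective limit passing for $x''$, and define $g(i) \simeq pt''(q(i))$. For every $i \in \dom(x')$ the enumeration indexed by $q(i)$ converges to the point $x'_i \in T$, so the hypothesis on $pt''$ gives $g(i)\conv \in \dom(x'')$ and $x''_{g(i)} = x'_i$. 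Hence $g$ witnesses $x' \le x''$.

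The only step where care is needed is verifying that in (3) the point $x'_i$ is genuinely the limit of the enumeration $q(i)$ in the sense required by the definition of effective limit passing; this is exactly the content of Lemma~\ref{lm-normseq}, and the uniqueness clause following the definition of normed convergence guarantees that $pt''$ cannot equivocate and return an index of some other point. Everything else is bookkeeping about domains of partial computable functions.
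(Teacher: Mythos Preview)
Your argument is correct in all three parts; the compositions you give are exactly the right witnesses, and your invocation of Lemma~\ref{lm-normseq} for part~(3) is the intended mechanism. Note, however, that the present paper does not actually supply a proof of this lemma: it is quoted from Spreen~(1998) and stated without argument here, so there is no in-paper proof to compare against. Your write-up is precisely the standard proof one would expect and matches what is given in the cited source.
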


The next result is now a consequence of Lemma~\ref{lem-eff}.

\begin{corollary}\label{cor-acc-eq}
Let $\prec_B$ be computably enumerable and $x$ acceptable. Then, for any other numbering $x'$ of $T$, $x'$ is acceptable exactly if $x$ and $x'$ are equivalent.
\end{corollary}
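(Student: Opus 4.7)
The plan is to chain together Lemma~\ref{lem-eff} and the three clauses of Lemma~\ref{lem-numb-red}, using the assumption that $\prec_B$ is computably enumerable to convert computability of any numbering into effectivity of the space with respect to that numbering. Since $x$ is acceptable, $x$ is already computable and allows effective limit passing, and by Lemma~\ref{lem-eff} $\mathcal{T}$ is effective with respect to $x$.

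For the easy direction, assume $x \equiv x'$, i.e.\ $x \le x'$ and $x' \le x$. Applying Lemma~\ref{lem-numb-red}(1) to $x$ (which is computable) together with $x' \le x$ yields that $x'$ is computable; applying Lemma~\ref{lem-numb-red}(2) to $x$ (which allows effective limit passing) together with $x \le x'$ yields that $x'$ allows effective limit passing. Hence $x'$ is acceptable.

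For the converse, assume $x'$ is acceptable. Since $\mathcal{T}$ is effective with respect to the computable $x$ and $x'$ allows effective limit passing, Lemma~\ref{lem-numb-red}(3) gives $x \le x'$. Conversely, since $x'$ is computable and $\prec_B$ is computably enumerable, Lemma~\ref{lem-eff} yields that $\mathcal{T}$ is effective with respect to $x'$; as $x$ allows effective limit passing, a second application of Lemma~\ref{lem-numb-red}(3), now with the roles of $x$ and $x'$ interchanged, gives $x' \le x$. Thus $x \equiv x'$.

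There is no real obstacle here: the corollary is essentially a bookkeeping combination of the preceding lemmas, and the only point that requires care is that Lemma~\ref{lem-numb-red}(3) demands effectivity of $\mathcal{T}$ with respect to the numbering on the left of the reduction. This is precisely where the hypothesis that $\prec_B$ is computably enumerable is used, via Lemma~\ref{lem-eff}, so that the effectivity hypothesis is automatically available for both $x$ and $x'$ once they are known to be computable.
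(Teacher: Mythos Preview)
Your proof is correct and follows exactly the approach the paper intends: the corollary is stated without explicit proof, merely as ``a consequence of Lemma~\ref{lem-eff}'' following Lemma~\ref{lem-numb-red}, and your argument spells out precisely that bookkeeping combination. In particular, your observation that Lemma~\ref{lem-eff} must be invoked separately for $x'$ (to supply the effectivity hypothesis needed for the second application of Lemma~\ref{lem-numb-red}(3)) is the right point of care.
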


We will now give an example of an acceptable numbering that  shall be used again later.

\begin{proposition}\label{prop-num}
Let $\mathcal{T}$ be such that $\prec_B$ is computably enumerable and the neighbourhood filter of each point in $T$ has an enumerable strong base of basic open sets. Then, $T$ has an acceptable numbering.
\end{proposition}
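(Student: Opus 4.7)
My plan is to define $x$ by setting $x_i = y$ iff $\set{B_a}{a \in W_i}$ is a strong base of the neighbourhood filter $\mathcal{N}(y)$; uniqueness of $y$ is guaranteed by the $T_0$ property, since a base of $\mathcal{N}(y)$ determines the point. Onto-ness of $x$ follows directly from the hypothesis: for each $y \in T$ there is a c.e.\ set $E \subseteq \omega$ with $\set{B_e}{e \in E}$ a strong base of $\mathcal{N}(y)$, and any $i$ with $W_i = E$ satisfies $x_i = y$.

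To show $x$ is computable I would establish, for every $i \in \dom(x)$ and $n \in \omega$,
\[
x_i \in B_n \iff (\exists a)(a \in W_i \wedge a \prec_B n).
\]
The direction $(\Leftarrow)$ combines $B_a \subseteq B_n$ (from $\prec_B$ being a strong inclusion) with $x_i \in B_a$, which holds because $B_a$ lies in the strong base of $\mathcal{N}(x_i)$, hence in that filter. The direction $(\Rightarrow)$ is exactly clause~(\ref{filt_sub}) of the strong-base definition applied to $B_n \in \mathcal{N}(x_i)$ and $\set{B_a}{a \in W_i}$. Putting
\[
L = \set{\pair{i,n}}{(\exists a)(a \in W_i \wedge a \prec_B n)},
\]
the set $L$ is c.e.\ because $\prec_B$ is c.e.\ by hypothesis and the family $(W_i)_i$ is uniformly c.e.\ in $i$, which gives computability.

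For effective limit passing I would use the $s$-$m$-$n$ theorem to produce a total computable $pt$ with $W_{pt(m)} = \range(\varphi_m)$ for every $m \in \omega$; concretely, the machine indexed by $pt(m)$ on input $n$ dovetails $\varphi_m(0), \varphi_m(1), \ldots$ and halts iff some $\varphi_m(k)$ equals $n$. If $m$ happens to be an index of a normed computable enumeration converging to some $y$, then $\set{B_a}{a \in \range(\varphi_m)}$ is precisely the range of that enumeration and therefore, by the definition of convergence, a strong base of $\mathcal{N}(y)$. Consequently $pt(m) \in \dom(x)$ and $x_{pt(m)} = y$, so $x$ is acceptable.

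The genuinely substantive point I expect to have to nail down carefully is the reading of the hypothesis of enumerable strong bases as supplying a c.e.\ index set whose members directly witness clauses~(\ref{filt_filtered}) and~(\ref{filt_sub}); this is what makes the $(\Rightarrow)$ direction of the computability equivalence cash out to an honestly c.e.\ condition on $W_i$ and $\prec_B$. Once that effective reading of the hypothesis is in place, everything else is a direct unpacking of the definitions together with a single appeal to the $s$-$m$-$n$ theorem.
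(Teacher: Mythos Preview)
Your proof is essentially identical to the paper's: the same numbering (set $\bar{x}_i = y$ when $\{B_a : a \in W_i\}$ is a strong base of $\mathcal{N}(y)$), the same equivalence $\bar{x}_i \in B_n \Leftrightarrow (\exists a \in W_i)\, a \prec_B n$ for computability, and the same device $W_{pt(m)} = \range(\varphi_m)$ for effective limit passing. Your closing caution is more than the paper itself exercises---it simply asserts the biconditional and moves on---so nothing further needs to be ``nailed down'' beyond what you have already written; note also that the $(\Rightarrow)$ direction rests on the definition of $\dom(x)$ (that $\{B_a : a \in W_i\}$ \emph{is} a strong base), not on the hypothesis, which is used only for surjectivity.
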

\begin{proof}
If $\set{B_n}{n \in W_e}$ is a strong base of the neighbourhood filter of some point $y \in T$, set $\bar{x}_e = y$. Otherwise, let $\bar{x}$ be undefined. Because of the assumption, $\bar{x}$ is a numbering of $T$. Let $L = \set{\pair{e,n}}{n \in W_e}$. Then $L$ is computably enumerable. Moreover, we have for $i \in \dom(\bar{x})$ that
\[
\bar{x}_i \in B_n \Longleftrightarrow (\exists m \in W_i) m \prec_B n  \Longleftrightarrow (\exists m) \pair{i,m} \in L \wedge m \prec_B n,
\]
which shows that $\bar{x}$ is computable.

Next, let $m$ be an index of a normed computable enumeration of basic open sets converging to some point $y \in T$. Then $\set{B_n}{n \in \range(\varphi_m)}$ is a strong base of $\mathcal{N}(y)$. Hence, $y = \bar{x}_{t(m)}$, where $t \in R^{(1)}$ is such that $W_{t(a)} = \range(\varphi_a)$, showing that $\bar{x}$ also allows effective limit passing.
\end{proof}

For basic open sets $B_n$,  let 
\[
\hl(B_n) = \bigcap\set{B_a}{n \prec_B a}.
\]
Sometimes, when we need to choose certain elements in $B_n$, we may not be able to find them in $B_n$, but then we want to find them as close to $B_n$ as possible.

Let $X$ be an subset of $T$. A typical situation in many proofs is that we need to show for some basic open set $B_e$ that $B_e \subseteq X$. We would  try  a proof by contradiction and assume that $B_e \not\subseteq X$. Then we would \emph{choose}, uniformly in $e$ and perhaps some index of $X$, an element $z \in B_e \setminus X$ and derive a contradiction. In a non-effective setting the Axiom of Choice permits proceeding in this way. In an effective context, however, we have to effectively find such a \emph{witness} $z$. The situation particularly occurs in continuity proofs. In this case $X$ is the preimage of a basic open set $B'_n$ under a map $\fun{F}{T}{T'}$, where $\mathcal{T'} = (T', \tau')$ is a further countable $T_0$ space with countable basis $\mathcal{B'}$, a total numbering $B'$ of $\mathcal{B'}$, and an indexing $x'$ of $T'$. 

\begin{definition}[Spreen and Young 1984]\label{df-witt}
$F$ has \emph{a witness for noninclusion}, if there is a pair $(s, r) \in P^{(2)} \times  P^{(3)}$, the \emph{noninclusion witness},  such that for $i \in \dom(x)$ and $e, n \in \omega$ the following hold:
\begin{enumerate}
\item\label{df-wit-1}  If $F(x_i) \in B'_n$, then $s( i , n)\conv$ so that $F(x_i) \in B'_{s(i,n)} \subseteq B'_n$.

\item\label{df-wit-2} If, in addition, $F(B_e) \not\subseteq B'_n$, then also $r(i,e,n)\conv \in\dom(x)$ with $x_{r(i,e,n)} \in \hl(B_e) \setminus F^{-1}(B'_{s(i,n)})$.

\end{enumerate}
\end{definition}

To understand this definition, suppose that $F(x_i) \in B'_n$, but the neighbourhood $B_e$ of $x_i$ does not map into the neighbourhood $B'_n$ of $F(x_i)$. Then we can effectively find a (possibly) smaller neighbourhood $B'_a$ with $F(x_i) \in B'_a \subseteq B'_n$ and a point $x_{r(i,e,n)}$ which, under $F$, maps, not necessarily outside $B'_n$, but at least outside $B'_a$. Obviously, $a$ may depend on $i$ and $n$.

\section{Special cases}\label{speccase}

In this section we introduce some important standard examples of effective $T_0$ spaces: constructive domains and constructive metric spaces. Domains play a major role in theoretical computer science, particularly in programming language semantics~\cite{gu92,ac98,gi03} and exact real number computation~\cite{ed97}. Metric spaces, on the other hand, are well known from applied mathematics. Topologically, as well as computationally, both spaces are quite different: In general domains satisfy only $T_0$ separation, whereas metric spaces are Hausdorff.   

As is well known, $T_0$ spaces come equipped with a canonical order $\le_\tau$, called \emph{specialization order}: For $y, z \in T$,
\[
y \le_\tau z \Longleftrightarrow (\forall n \in \omega) [y \in B_n \Rightarrow z \in B_n].
\]
Every open set is upwards closed under the specialization order and continuous maps are monotone with respect to it. 

As has already been pointed out, limits of point sequences in $T_0$ spaces  need not be uniquely determined. In case the sequence is monotonically increasing with respect to the specialization order, every sequence element is a limit.

\subsection{Constructive domains}\label{domains}

Let $Q = (Q, \sqsubseteq)$ be a partial order with least element. A nonempty subset $S$ of $Q$ is \emph{directed}, if for all $y_1, y_2 \in S$ there is some $u \in S$ with $y_1, y_2 \sqsubseteq u$. The {\em way-below relation\/} $\ll$ on $Q$ is defined as follows: $y_1 \ll y_2$ if for every directed subset $S$ of $Q$ the least upper bound of which exists in Q, the relation $y_2 \sqsubseteq \bigsqcup S$  implies  the existence of an element  $u \in S$   with $y_1 \sqsubseteq u$. Note that $\ll$ is transitive.  

A subset $Z$ of $Q$ is a {\em basis\/} of $Q$, if for any  $y \in Q$ the set $Z_y = \{\,z \in Z \mid z \ll y \,\}$ is directed and $y = \bigsqcup Z_y$. A partial order that has a basis is called {\em continuous}. 

Now, assume that $Q$ is countable and let $x$ be an indexing of $Q$. Then $Q$ is {\em constructively d-complete\/}, if each of its enumerable directed subsets has a least upper bound in $Q$.  Let $Q$ be constructively d-complete and continuous with basis $Z$. Moreover, let $\beta$ be a total numbering of $Z$. Then $(Q, \sqsubseteq, Z, \beta, x)$ is said to be a {\em constructive domain}, if the restriction of the way-below relation to $Z$ as well as all sets $Z_y$, for $y \in Q$, are completely enumerable with
respect to the indexing $\beta$, and $\beta \le x$. 

A numbering $x$ of $Q$ is said to be {\em admissible}, if the set $\{\,\langle i,j \rangle \mid \beta_i \ll x_j\,\}$ is computably enumerable and there is a function  $d \in R^{(1)}$ such that for all indices $i \in \omega$ for which $\beta(W_i)$ is directed, $x_{d(i)}$ is the least upper bound of $\beta(W_i)$. As shown in \cite{wd80}, such numberings always exist. They can even be chosen as total. 

Partial orders come with several natural topologies. In the applications we have in mind, one is mainly interested in the {\em Scott topology} $\sigma$: a subset $X$ of $Q$ is open in $\sigma$, if it is upwards closed with respect to the partial order and intersects each enumerable directed subset of $Q$ of which it contains the least upper bound. 

The Scott topology satisfies $T_0$ separation, but in general not $T_1$. The partial order on $Q$ coincides with the specialization order defined by the topology in this case \cite{gi03}. Moreover, least upper bounds of monotonically increasing sequences are limits; in particular they are maximal limits.  

In the case of a constructive domain the Scott topology is generated by the sets $B_n = \set{y \in Q}{\beta_n \ll y}$ with $n \in \omega$. It follows that ${\cal Q} =(Q, \sigma)$ is a countable $T_0$-space with countable basis.  Obviously, every admissible numbering is computable.

Define 
\[
m \prec_B n \Leftrightarrow \beta_n \ll \beta_m.
\] 
Then $\prec_B$ is a strong inclusion with respect to which the collection of all $B_n$ is a strong basis. Because the restriction of $\ll$ to $Z$ is completely enumerable, $\prec_B$ is computably enumerable.\ It follows that ${\cal Q}$ is effective. Moreover, each admissible indexing allows effective limit passing, i.e., it is acceptable. Conversely, every acceptable numbering of $Q$ is admissible. 

Note that the set $P^{(1)}$ of partial computable functions, ordered by $f \sqsubseteq g$, if $g$ extends $f$, is a constructive domain. The finite functions form a basis and each G\"odel numbering is admissible. 

As a further example consider the set $\mathbb{I}[0,1]_c$ of all closed subintervals of $[0,1]$ with computable real numbers as endpoints. Ordered by converse set inclusion $\mathbb{I}[0,1]_c$ is a constructive domain with the closed intervals having rational endpoints as basis. The computable real numbers $z$ in $[0,1]$ correspond to the one-point intervals $[z,z]$.

Domains are usually introduced as an ordered structure. The basic notions are order-theoretic, topology is introduced only at a later step.
In order to provide a (more) topological approach to domain theory, Er\v{s}ov (1972, 1973, 1975, 1977) introduced $A$- and $f$-spaces. They are not required to be complete. Constructive $A$- and $f$-spaces as introduced in \cite{sp98} are further examples of effective $T_0$ spaces.

An essential property of constructive domains, just as of Er\v{s}ov's $A$- and $f$-spaces, is that their canonical topology has a basis with every basic open set $B_n$ being an upper set generated by a point which is not necessarily included in $B_n$, but in $\hl(B_n)$.

\begin{definition}\label{def-point}
A countable $T_0$ space $\mathcal{T}$ with countable basis $\mathcal{B}$ and numberings $x$ and $B$ of $T$ and $\mathcal{B}$, respectively, is \emph{effectively pointed}, if there is a function $\pd \in P^{(1)}$ such that for all $n \in \omega$ with $B_n \not= \emptyset$, $\pd(n)\conv \in \dom(x)$, $x_{\pd(n)} \in \hl(B_n)$ and $x_{\pd(n)} \le_\tau z$, for all $z \in B_n$.
\end{definition}

Note that $\set{x_a}{a \in \range(\pd)}$ is dense in $\mathcal{T}$. For a constructive domain $(Q, \sqsubseteq, Z, \beta, x)$, let $\pd \in R^{(1)}$ with $\beta = x \circ \pd$. It follows that $\mathcal{Q}$ is effectively pointed.

\subsection{Constructive metric spaces}\label{consmet}

Whereas domains as well as $A$- and $f$-spaces typically do not satisfy $T_2$ separation, in this section we will consider the standard example of an effective Hausdorff space.

Let ${\cal M} = (M, \delta)$ be a countable separable metric space and  $\beta$ be a total numbering of its dense subset $M_0$. As is well-known, the collection of sets $B_{\langle i, m \rangle} = \{\,y \in M \mid \delta(\beta_i, y) < 2^{-m}\,\}$ ($i$, $m \in \omega$) is a basis of the canonical Hausdorff topology $\Delta$ on $M$.

Define 
\[\langle i,m\rangle  \prec_B \langle j,n\rangle  
\Leftrightarrow  \delta(\beta_i,\beta_j) + 2^{-m} < 2^{-n}.
\]
Using the triangle inequality it is readily verified that $\prec_B$ is  a strong inclusion and the collection of all $B_a$ is a strong basis. 

\begin{definition}\label{met}
$\cal M$ is said to be {\em constructive}, if the sets 
\[
\set{\pair{i,j, a,n}}{\delta(\beta_i, \beta_j) < a \cdot 2^{-n}} \quad\text{and}\quad
\set{\pair{i,j,a,n}}{\delta(\beta_i, \beta_j) > a \cdot 2^{-n}}
\]
are computably enumerable, and the neighbourhood filter of each point has an enumerable strong base of basic open sets.
\end{definition}

Obviously, $\prec_B$ is computably enumerable in this case.

Well-known examples of constructive metric spaces include $\mathbb{R}^n_c$, that is the space of all $n$-tuples of computable real numbers with the Euclidean or the maximum norm; Baire space, that is the set $R^{(1)}$ of all total computable functions with the Baire metric \cite{ro67}; and the set $\omega$ with the discrete metric. By using an effective version of Weierstra{\ss}'s approximation theorem \cite{pr89} and Sturm's theorem \cite{st35} it can be shown that $C_c[0,1]$, the space of all computable functions from $[0,1]$ to $\mathbb{R}$ with the supremum norm \cite{pr89}, is a constructive metric space too. A proof of this result and further examples can be found in  \cite{bl97}.

\section{Limit algorithms}\label{lim}

In this note we assume each space to come with a rich collection \textsc{Seq} of canonical computable sequences with the following properties:
\begin{enumerate}
\item\label{prop-seq-1} All computable sequences that are monotonically increasing with respect to the specialization order are in \textsc{Seq}.

\item\label{prop-seq-2} There is a function $p \in R^{(1)}$ such that for each index $m$ of a normed computable enumeration of basic open sets converging, say, to $y \in T$, $p(m)$ is an index of a computable sequence of points in \textsc{Seq} satisfying the subsequent two conditions:
\begin{enumerate}

\item\label{prop-seq-2-1} $x_{\varphi_{p(m)}(a)} \in \hl(B_{\varphi_m(a)})$.

\item\label{prop-seq-2-2} In case $\mathcal{T}$ does not satisfy $T_1$ separation, there exist at most finitely many $a \in \omega$ with $x_{\varphi_{p(m)}(a)} \in B_n$, for every basic open set $B_n \not\in \mathcal{N}(y)$.

\end{enumerate} 

\item\label{prop-seq-3} If $(y_a)_a$ is in \textsc{Seq}, then, for every $\bar{a} \in \omega$, $(y'_a)_a$ is in \textsc{Seq} as well, where $y'_a = y_a$, for $a < \bar{a}$, and $y'_a = y_{\bar{a}}$, otherwise.

\end{enumerate}

In the case of effectively pointed spaces, \textsc{Seq} consists of all computable monotonically increasing sequences. Let $\pd \in P^{(1)}$ be as in Definition~\ref{def-point}. Then, if $m$ is an index of a converging normed computable enumeration of basic open sets, we have for all $a \in \omega$ that $x_{\pd(\varphi_m(a))} \in \hl(B_{\varphi_m(a)})$. In order to see that also the second condition holds, note that by definition each open set is upwards closed under the specialization order. Therefore, if for some $a \in \omega$, $x_{\pd(\varphi_m(a))} \in B_n$, then for all $a' > a$, also $x_{\pd(\varphi_m(a'))} \in B_n$.
Thus, $p \in R^{(1)}$ with $\varphi_{p(m)}(a) = \pd(\varphi_m(a))$ has the desired property.

In the metric case we let \textsc{Seq} be the set of all computable regular Cauchy sequences, where a Cauchy sequence $(y_a)_a$ is \emph{regular} (or, \emph{fast}), if $\delta(y_m, y_n) < 2^{-m}$, for all $n \ge m$. Instead, one could also take the set of all computable Cauchy sequences with a computable Cauchy criterion (cf.\ \cite{mo65}). If $x$ is such that for some $g \in R^{(1)}$, $\beta = x \circ g$, and $m$ is an index of a normed computable enumeration of basic open sets, choose $p \in R^{(1)}$ with $\varphi_{p(m)}(a) = g(\pi_1(\varphi_m(a)))$, for $a, m \in \omega$.

In Section \ref{eff} as well as in other papers we based our approach to the computation of limits on filter convergence. In the earlier paper \cite{sy84}, however, we used point sequence convergence. One of the main reasons for moving to filters was that in $T_0$ spaces the limit of a point sequence is not uniquely determined, in general: if $y$ is a limit point, every $z$ with $z \le_\tau y$ is a limit point as well. We denote the set of limit points of a sequence $(y_a)_a$ by $\Lim_a y_a$.

Because every normed enumeration $(B_{\varphi_m(a)})_a$ of basic open sets converging to a point $y \in T$ is a base of $\mathcal{N}(y)$, it follows with Property~(\ref{prop-seq-2-1}) that $y \in \Lim_a x_{\varphi_{p(m)}(a)}$. With (\ref{prop-seq-2-2}) we moreover obtain that $y = \max_{\le_\tau} \Lim_a x_{\varphi_{p(m)}(a)}$.

To see this, assume there is some $z \in \Lim_a x_{\varphi_{p(m)}(a)}$ with $z \not\le_\tau y$. Then there exists $B_n \in \mathcal{N}(z) \setminus \mathcal{N}(y)$. It follows for some $\bar{a} \in \omega$ that $x_{\varphi_{p(m)}(a)} \in B_n$, for all $a \ge \bar{a}$, which is impossible by Property~(\ref{prop-seq-2-2}).

\begin{definition}\label{df-la}
A numbering $x$ of $T$ has a \emph{limit algorithm}, if there is a function $\li \in P^{(1)}$ such that the following four conditions hold, for all indices $m, m'$ of convergent\footnote{The convergence of point sequences in \textsc{Seq} is to be understood in the usual topological way.} sequences in \textsc{Seq}:
\begin{enumerate}

\item\label{df-la-0} $\li(m)\conv \in \dom(x)$.

\item\label{df-la-1} $x_{\li(m)} \in \Lim_a x_{\varphi_m(a)}$

\item\label{df-la-2} If, for some $\bar{a} \in \omega$, $x_{\varphi_m(a)} = x_{\varphi_m(\bar{a})}$, for all $a \ge \bar{a}$, then $x_{\li(m)} = x_{\varphi_m(\bar{a})}$.

\item\label{df-la-3} If $\Lim_a x_{\varphi_m(a)} = \Lim_a x_{\varphi_{m'}(a)}$, then $x_{\li(m)} = x_{\li(m')}$.
\end{enumerate}
\end{definition}

The property of being able to compute limits may seem unusually strong, in particular in the light of Specker's result that being able to compute limits of arbitrary computable (not necessarily fast) converging sequences is equivalent to deciding the halting problem~\cite{sk49}. But note that here this property is only required to hold for canonical sequences in \textsc{Seq}. Note further that we are not interested in computing just one limit of a given computable canonical sequence. Instead we want to compute a distinguished limit which in the case of monotonically increasing sequences is the maximal limit, if it exists. This will become clear in the sequel.

A typical technique in enumeration is to wait and see, i.e., to repeat what has already been enumerated till new information becomes available. This motivates the following condition.

\begin{definition}
A sequence $(y_a)_a$ in \textsc{Seq} is said to \emph{allow delaying}, if for all $\bar{a}, m \in \omega$ the sequence $(y'_a)_a$ with $y'_a = y_a$, for $a < \bar{a}$, $y'_a = y_{\bar{a}}$, for $\bar{a} \le a \le \bar{a} + m$, and $y'_a = y_{a - m + 1 }$,  otherwise, is in \textsc{Seq} as well.  
\end{definition} 

We start with a general result.

\begin{proposition}\label{prop-liprop1}
Let $x$ have a limit algorithm. Moreover, let $X$ be a completely enumerable subset of $T$ and $(y_a)_a$ a convergent sequence in \textsc{Seq} that allows delaying. Then for every index $m$ of $(y_a)_a$ and each number $\bar{a} \in \omega$, if $y_{\bar{a}} \in X$, also $x_{\li(m)} \in X$.
\end{proposition}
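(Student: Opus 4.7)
The plan is to fix a computably enumerable set $A$ witnessing the complete enumerability of $X$ (so that, for $i\in\dom(x)$, $x_i\in X$ iff $i\in A$) and then, via Kleene's recursion theorem, build a self-referential computable sequence whose own $\li$-value is forced into $A$. Writing $\varphi_m$ for the tracker of $(y_a)_a$, I would give a uniform construction that, given any index $e$, defines $\varphi_{h(e)}$ as follows: for $a<\bar a$, set $\varphi_{h(e)}(a)=\varphi_m(a)$; for $a\ge\bar a$, search at stage $a$ for the least $k\le a$ with $\li(e)\conv_k$ and $\li(e)\in A_k$, where $A_s$ denotes stage $s$ of a fixed enumeration of $A$. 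As long as no such $k^*$ has been found, output $\varphi_m(\bar a)$; once $k^*$ is observed, output the $a$-th value of the delayed version of $(y_a)_a$ at position $\bar a$ with parameter $k^*$ as described in the definition of allowing delaying. By the recursion theorem pick $n$ with $\varphi_n=\varphi_{h(n)}$, and let $z_a=x_{\varphi_n(a)}$.

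A case analysis on whether $k^*$ is ever found finishes the proof. If it is, then $(z_a)_a$ is exactly the delayed version of $(y_a)_a$ with parameter $k^*$; the delaying hypothesis places it in \textsc{Seq}, and since its tail agrees with a tail of $(y_a)_a$ up to reindexing, we have $\Lim_a z_a=\Lim_a y_a$. Condition~(\ref{df-la-3}) then gives $x_{\li(n)}=x_{\li(m)}$, while the defining success of $k^*$ gives $\li(n)\in A$, so that $x_{\li(m)}=x_{\li(n)}\in X$. If $k^*$ is never found, then $z_a=y_{\bar a}$ for every $a\ge\bar a$; property~(\ref{prop-seq-3}) of \textsc{Seq} places $(z_a)_a$ in \textsc{Seq}, and conditions~(\ref{df-la-0}) and~(\ref{df-la-2}) yield $\li(n)\conv\in\dom(x)$ with $x_{\li(n)}=y_{\bar a}$. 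Since $y_{\bar a}\in X$, complete enumerability forces $\li(n)\in A$. But then $\li(n)$ is eventually observed in $A_s$ at some stage $s$ past the convergence of $\li(n)$, contradicting the hypothesis that $k^*$ is never found. Hence the first case must hold and $x_{\li(m)}\in X$.

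The main obstacle is the circularity of the construction: the behaviour of $(z_a)_a$ has to depend on $\li$ applied to an index of $(z_a)_a$ itself, which is precisely the situation that Kleene's recursion theorem resolves. A secondary but essential check is that $(z_a)_a$ actually lies in \textsc{Seq} in either branch of the case analysis; this is covered by the delaying hypothesis in the first branch and by property~(\ref{prop-seq-3}) of \textsc{Seq} in the second. Once these two points are in place the proof reduces to the straightforward combination of conditions~(\ref{df-la-0}),~(\ref{df-la-2}) and~(\ref{df-la-3}) with the complete enumerability of $X$.
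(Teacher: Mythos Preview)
Your argument is correct and essentially identical to the paper's: both fix a c.e.\ witness for $X$, use the recursion theorem to build a self-referential sequence that follows $(y_a)_a$ up to $\bar a$, then stalls at $y_{\bar a}$ until its own $\li$-value is seen in the witness set, and resumes as a delayed version thereafter; the eventually-constant branch is ruled out via Condition~(\ref{df-la-2}) and complete enumerability, and the delayed branch gives $x_{\li(m)}=x_{\li(n)}\in X$ via Condition~(\ref{df-la-3}). The only cosmetic difference is bookkeeping of the delay parameter (you use $k^*$ directly, the paper uses the first stage $c>\bar a$ at which both convergences are witnessed), which does not affect the argument.
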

\begin{proof}
Let $W_e$ witness the complete enumerability of $X$ and set $g(\bar{b}) = \mu c > \bar{a}: \li(\bar{b})\conv_c \wedge \varphi_e(\li(\bar{b}))\conv_c$. By the recursion theorem there is then some $b \in \omega$ with
\[
\varphi_{b}(a) = \begin{cases}
                                           \varphi_m(a) & \text{ if $a \le \bar{a}$,}\\
				\varphi_m(\bar{a}) & \text{ if $ a > \bar{a}$, and $\li(b)\nconv_a$ or  $\varphi_e(\li(b))\nconv_a$,}\\
				\varphi_m(\bar{a} + a - g(b) + 1)  & \text{ if $a > \bar{a}$, $\li(b)\conv_a$, and $\varphi_e(\li(b))\conv_a$.}
			       \end{cases}
\]	

Suppose that $g(b)\nconv$. Because of Property~\ref{prop-seq-3} of \textsc{Seq}, the sequence $(x_{\varphi_{b}(a)})_a$ is in \textsc{Seq} in  this case. Moreover, it converges to $y_{\bar{a}}$. With Condition~\ref{df-la}(\ref{df-la-2}) we therefore obtain that $\li(b)\conv$ and $x_{\li(b)} = y_{\bar{a}}$. By our assumption, $y_{\bar{a}} \in X$, i.e., $\varphi_m(\bar{a}) \in W_e$. It follows that $\li(b) \in W_e$ as well, which means that $g(b)\conv$, a contradiction.

So we have that both, $\li(b)\conv$ and $\varphi_e(\li(b))\conv$. Since $(y_a)_a$ allows delaying, it follows that the just defined sequence with index  $b$ is in \textsc{Seq}, also in this case. Moreover, $\Lim_a x_{\varphi_{b}(a)} = \Lim_a y_a$ and hence, by Condition~\ref{df-la}(\ref{df-la-3}), $x_{\li(m)} = x_{\li(b)}$. As a further consequence, $\li(b) \in W_e$, which means that $x_{\li(b)} \in X$. This shows that $x_{\li(m)} \in X$.
\end{proof}

Let $u, z \in T$ with $u \le_\tau z$. Then the sequence with $y_a = u$, for $a \le \bar{a}$ and $y_a = z$, otherwise, for some $\bar{a} \in \omega$, is in \textsc{Seq}, by Condition~\ref{prop-seq-1} for \textsc{Seq}, and obviously allows delaying. Thus, if $u \in X$, then $z \in X$ as well.

\begin{corollary}\label{cor-cecl}
Let $x$ have a limit algorithm.  Then each completely enumerable subset of $T$ is upwards closed under the specialization order.
\end{corollary}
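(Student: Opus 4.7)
The plan is to reduce the corollary to a single invocation of Proposition~\ref{prop-liprop1}, using the two-point step sequence already highlighted in the paragraph preceding the statement. Let $X \subseteq T$ be completely enumerable and fix $u, z \in T$ with $u \in X$ and $u \le_\tau z$; the goal is to show $z \in X$.

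First I would manufacture a witness sequence. Since $x$ is a numbering of $T$, choose indices $i, j \in \dom(x)$ of $u$ and $z$, and let $m$ be a G\"odel number for the computable function with $\varphi_m(a) = i$ for $a \le \bar a$ and $\varphi_m(a) = j$ otherwise, where I fix some convenient $\bar a \in \omega$ (say $\bar a = 0$). The resulting computable sequence $(y_a)_a$ satisfies $y_a = u$ for $a \le \bar a$ and $y_a = z$ for $a > \bar a$. Because $u \le_\tau z$, this sequence is monotonically increasing with respect to the specialization order, so by Property~\eqref{prop-seq-1} of \textsc{Seq} it lies in \textsc{Seq}, and it converges topologically (it is eventually constant).

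Next I would verify the two hypotheses needed to invoke Proposition~\ref{prop-liprop1}. That $(y_a)_a$ allows delaying is immediate: any of its delayed versions still takes only the two values $u, z$ in a $\le_\tau$-monotone fashion, so Property~\eqref{prop-seq-1} of \textsc{Seq} keeps them in \textsc{Seq}. Moreover, since $(y_a)_a$ is constantly $z$ for $a > \bar a$, Condition~\ref{df-la}(\ref{df-la-2}) of the limit algorithm yields $x_{\li(m)} = z$.

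Finally, applying Proposition~\ref{prop-liprop1} with the chosen $\bar a$ gives: from $y_{\bar a} = u \in X$ we conclude $x_{\li(m)} \in X$, that is, $z \in X$. There is no real obstacle here; the only point requiring care is the observation that delayed versions of a two-valued $\le_\tau$-monotone sequence stay monotone, which is precisely why Property~\eqref{prop-seq-1} suffices to replace the more delicate closure condition in the definition of ``allows delaying''.
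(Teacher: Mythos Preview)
Your proposal is correct and follows essentially the same approach as the paper: the paragraph immediately preceding the corollary already constructs the two-step monotone sequence $(u,\dots,u,z,z,\dots)$, observes it lies in \textsc{Seq} by Property~(\ref{prop-seq-1}) and allows delaying, and then reads off the conclusion from Proposition~\ref{prop-liprop1}. You are simply more explicit than the paper in noting that Condition~\ref{df-la}(\ref{df-la-2}) forces $x_{\li(m)} = z$ (applied at stage $\bar a + 1$, where the sequence becomes constant), which is exactly what turns the conclusion $x_{\li(m)} \in X$ of the proposition into $z \in X$.
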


Next, suppose that $\mathcal{T'} = (T', \tau')$ is a further countable $T_0$ space with countable basis $\mathcal{B'}$ and numberings $x'$ and $B'$ of $T'$ and $\mathcal{B'}$, respectively, such that $B'$ is total. Moreover,  recall that the preimage of a completely enumerable set under an effective map is completely enumerable again.

\begin{corollary}\label{cor-effmon}
Let $x$ have a limit algorithm and $x'$ be computable. Then every effective map $\fun{F}{T}{T'}$ is monotone with respect to the specialization order. 
\end{corollary}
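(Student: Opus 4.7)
The plan is to reduce the corollary to Corollary~\ref{cor-cecl} via the observation (made in the paragraph after Definition~\ref{def-fct-eff}) that the preimage of a completely enumerable set under an effective map is again completely enumerable. First, since $x'$ is computable, every basic open set $B'_n$ is completely enumerable in $T'$, uniformly in $n$; this is exactly what the definition of a computable indexing of a topological space delivers. Applying the preimage observation to the effective map $F$, it follows that $F^{-1}(B'_n) \subseteq T$ is completely enumerable (with respect to $x$), for every $n \in \omega$.

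Next, because $x$ has a limit algorithm, Corollary~\ref{cor-cecl} applies and tells us that every completely enumerable subset of $T$ is upwards closed under the specialization order $\le_\tau$ of $\mathcal{T}$. In particular, each preimage $F^{-1}(B'_n)$ is upwards closed under $\le_\tau$.

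The remaining step is then a straightforward unravelling of the definition of $\le_{\tau'}$. Take any $y, z \in T$ with $y \le_\tau z$, and fix an arbitrary $n \in \omega$ with $F(y) \in B'_n$. Then $y \in F^{-1}(B'_n)$, and the upwards closure under $\le_\tau$ that we just established yields $z \in F^{-1}(B'_n)$, i.e., $F(z) \in B'_n$. Since $n$ was arbitrary, this is precisely the condition $F(y) \le_{\tau'} F(z)$, so $F$ is monotone.

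There is no real obstacle here: the whole content of the corollary lies in the combination of Corollary~\ref{cor-cecl} (the nontrivial closure property derived from the limit algorithm via Proposition~\ref{prop-liprop1} and Property~\ref{prop-seq-1} of \textsc{Seq}) with the elementary fact that effective maps pull completely enumerable sets back to completely enumerable sets. The only thing worth double-checking is that the complete enumerability of $B'_n$ obtained from the computability of $x'$ transfers to the preimage using the \emph{same} numbering $x$ of the domain which carries the limit algorithm — and this is exactly what the framework of fixed numberings (fixed at the end of Section~\ref{basic}) ensures.
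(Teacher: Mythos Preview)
Your proof is correct and follows exactly the route the paper intends: the paper states this corollary immediately after recalling that preimages of completely enumerable sets under effective maps are completely enumerable, and leaves the details implicit precisely because they reduce to Corollary~\ref{cor-cecl} together with the computability of $x'$. Your write-up just makes explicit what the paper leaves tacit.
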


In case that $\mathcal{T}$ is effectively pointed, this result implies that every effective map $\fun{F}{T}{T'}$ has a witness for noninclusion. In what follows we need that noninclusion witnesses respect the canonicity of the sequences in \textsc{Seq}. Let to this end $p \in R^{(1)}$ be as in Condition~\ref{prop-seq-2} for \textsc{Seq}.

\begin{definition}\label{df-compat}
A noninclusion witness $(s,r) \in P^{(2)} \times P^{(3)}$ for a map $\fun{F}{T}{T'}$ is called \emph{appropriate} if for each index $m$ of a computable normed enumeration of basic open sets converging to $x_i$, every $n \in \omega$ so that $F(x_i) \in B'_n$, and each $e \in \omega$ such that $F(B_{\varphi_m(e)}) \not\subseteq B'_n$, the sequence $(y_a)_a$ with $y_a = x_{\varphi_{p(m)}(a)}$, for $a < e$, and $y_a = x_{r(i,\varphi_m(e),n)}$, otherwise, is in \textsc{Seq}. 
\end{definition}

A requirement of this kind was already used when the condition of having a witness for noninclusion was introduced in \cite{sy84}, but dropped later.

\begin{corollary}\label{cor-pt-wit}
Let $\mathcal{T}$ be effectively pointed, $x$ have a limit algorithm and $x'$ be computable. Then every effective map $\fun{F}{T}{T'}$ has an appropriate witness for noninclusion.
\end{corollary}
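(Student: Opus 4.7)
The plan is to build an explicit pair $(s,r) \in P^{(2)} \times P^{(3)}$ from three ingredients: a tracking function $f$ for $F$ (so $F(x_i) = x'_{f(i)}$ for $i \in \dom(x)$), a computably enumerable set $L' \subseteq \omega$ witnessing the computability of $x'$ (so $\pair{j,n} \in L' \iff x'_j \in B'_n$ for $j \in \dom(x')$), and the function $\pd \in P^{(1)}$ supplied by effective pointedness. I would define $s(i,n)$ by computing $f(i)$ and then enumerating $L'$ until $\pair{f(i),n}$ is listed, outputting $n$ upon success. Then $s(i,n)\conv = n$ exactly when $F(x_i) \in B'_n$, and condition~(\ref{df-wit-1}) of Definition~\ref{df-witt} is immediate.

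For the second slot I would simply set $r(i,e,n) = \pd(e)$, ignoring $i$ and $n$. To check condition~(\ref{df-wit-2}), assume $F(x_i) \in B'_n$ and $F(B_e) \not\subseteq B'_n$. Then $B_e \ne \emptyset$, whence $\pd(e)\conv \in \dom(x)$, with $x_{\pd(e)} \in \hl(B_e)$ and $x_{\pd(e)} \le_\tau z$ for every $z \in B_e$. This is where the hypotheses of the corollary really bite: Corollary~\ref{cor-effmon} yields that $F$ is monotone under the specialization order, so $F(x_{\pd(e)}) \le_\tau F(z)$ for every $z \in B_e$. If $F(x_{\pd(e)}) \in B'_n$ held, upwards-closure of the open set $B'_n$ would propagate this to all of $F(B_e)$, contradicting the second hypothesis. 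Hence $x_{r(i,e,n)} = x_{\pd(e)} \in \hl(B_e) \setminus F^{-1}(B'_n) = \hl(B_e) \setminus F^{-1}(B'_{s(i,n)})$, as required.

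For appropriateness, the crucial observation is that in the effectively pointed case the paper takes $\varphi_{p(m)}(a) = \pd(\varphi_m(a))$, so with the above definition $r(i,\varphi_m(e),n) = \pd(\varphi_m(e)) = \varphi_{p(m)}(e)$. Therefore, for $m$, $n$, $e$ as in Definition~\ref{df-compat}, the sequence $(y_a)_a$ is exactly the canonical sequence $(x_{\varphi_{p(m)}(a)})_a \in \textsc{Seq}$ modified by freezing its value from index $e$ onwards at $x_{\varphi_{p(m)}(e)}$. This is precisely the construction described in property~(\ref{prop-seq-3}) of \textsc{Seq}, which immediately places $(y_a)_a$ in \textsc{Seq}.

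The only point that needs genuine thought is conceptual rather than computational: effective pointedness supplies a $\le_\tau$-least element of each non-empty $B_e$, and monotonicity of $F$ (inherited from the limit algorithm on $x$ via Corollary~\ref{cor-effmon}) forces that element, once mapped by $F$, to lie outside $B'_n$ whenever $F(B_e) \not\subseteq B'_n$. The happy alignment that makes appropriateness automatic is that this very element is also the $e$-th term of the canonical sequence produced by $p(m)$, so the witness slides seamlessly into the canonical sequence and property~(\ref{prop-seq-3}) does the rest.
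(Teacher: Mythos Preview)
Your argument is correct and matches the paper's proof almost exactly: the paper also sets $r(i,e,n)=\pd(e)$, invokes Corollary~\ref{cor-effmon} for the contrapositive inclusion argument, and appeals to Properties~(\ref{prop-seq-2}) and~(\ref{prop-seq-3}) of \textsc{Seq} for appropriateness, which you spell out in welcome detail. The only difference is that the paper takes $s(i,n)=n$ outright (a total function), whereas you build in an unnecessary wait for confirmation that $F(x_i)\in B'_n$; since Definition~\ref{df-witt}(\ref{df-wit-1}) imposes no requirement on $s$ outside that hypothesis, the simpler total choice already suffices.
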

\begin{proof}
Set $s(i,m) = m$ and $r(i,e,n) = \pd(e)$. Then $x_{r(i,e,n)} \in \hl(B_e)$. Suppose that $F(x_{r(i,e,n)}) \in B'_n$, i.e., $F(x_{\pd(e)})  \in B'_n$. By the preceding corollary it follows that $F(B_e) \subseteq B'_n$. Because of Properties~(\ref{prop-seq-2}) and (\ref{prop-seq-3}) of \textsc{Seq} the witness is also appropriate. 
\end{proof}

\begin{corollary}\label{cor-liprop}
 Let $x$ be computable and have a limit algorithm. Then, for every index $m$ of a convergent sequence in \textsc{Seq} that allows delaying, the following two statements hold:
\begin{enumerate}

\item\label{cor-liprop-1} $x_{\varphi_m(a)} \le_\tau x_{\li(m)}$, for all $a \in \omega$.

\item\label{cor-liprop-2} $y \le_\tau x_{\li(m)}$, for all $y \in \Lim_a x_{\varphi_m(a)}$.

\end{enumerate}
\end{corollary}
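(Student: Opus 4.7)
The plan is to reduce both statements to direct applications of Proposition~\ref{prop-liprop1}, using the fact that under the computability hypothesis on $x$ the basic open sets are the prototypical completely enumerable subsets of $T$.

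First I would observe that since $x$ is computable, the definition of computability gives, uniformly in $n$, a computably enumerable set $A_n = \{\, i \mid \pair{i,n} \in L\,\}$ witnessing that each basic open set $B_n$ is completely enumerable. This is the ingredient that lets us feed basic open sets into Proposition~\ref{prop-liprop1} as the set $X$.

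For statement~(\ref{cor-liprop-1}), fix $a \in \omega$ and let $B_n$ be any basic open set with $x_{\varphi_m(a)} \in B_n$. Set $\bar{a} = a$ and $X = B_n$ in Proposition~\ref{prop-liprop1}: the hypothesis $y_{\bar{a}} \in X$ is just $x_{\varphi_m(a)} \in B_n$, and complete enumerability of $X$ has been noted. Hence $x_{\li(m)} \in B_n$. Since this holds for every such $B_n$, we obtain $x_{\varphi_m(a)} \le_\tau x_{\li(m)}$ by the definition of the specialization order.

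For statement~(\ref{cor-liprop-2}), let $y \in \Lim_a x_{\varphi_m(a)}$ and let $B_n$ be a basic open set with $y \in B_n$. Then $B_n \in \mathcal{N}(y)$, and the topological convergence of $(x_{\varphi_m(a)})_a$ to $y$ yields some $\bar{a}$ with $x_{\varphi_m(a)} \in B_n$ for all $a \ge \bar{a}$. In particular $x_{\varphi_m(\bar{a})} \in B_n$, so Proposition~\ref{prop-liprop1} applied to $X = B_n$ gives $x_{\li(m)} \in B_n$. As $B_n$ was arbitrary among basic neighbourhoods of $y$, this proves $y \le_\tau x_{\li(m)}$.

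There is no real obstacle: the hard work has already been done in Proposition~\ref{prop-liprop1}, which is precisely the tool that transports membership in completely enumerable sets from an individual sequence element to the distinguished limit $x_{\li(m)}$. The only mildly non-routine point is noticing that the two statements are both instances of the same application, with $\bar{a}$ chosen differently: arbitrary in~(\ref{cor-liprop-1}), and supplied by topological convergence in~(\ref{cor-liprop-2}).
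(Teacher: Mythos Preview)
Your proof is correct and follows essentially the same approach as the paper: both parts are reduced to Proposition~\ref{prop-liprop1} applied with $X = B_n$, using computability of $x$ to guarantee that basic open sets are completely enumerable. The only cosmetic difference is that for part~(\ref{cor-liprop-2}) the paper cites the already-established part~(\ref{cor-liprop-1}) rather than re-invoking Proposition~\ref{prop-liprop1} directly, which amounts to the same thing.
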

\begin{proof} 
(\ref{cor-liprop-1}) Let $a, n \in \omega$ with $x_{\varphi_m(a)} \in B_n$. Then it follows with the preceding proposition that $x_{\li(m)} \in B_n$ as well. Thus, $x_{\varphi_m(a)} \le_\tau x_{\li(m)}$.

(\ref{cor-liprop-2}) Let $y \in \Lim_a x_{\varphi_m(a)}$ and $n \in \omega$ with $y \in B_n$. Then there is some $\bar{a} \in \omega$ so that $x_{\varphi_m(a)} \in B_n$, for all $a \ge \bar{a}$. By Statement~\ref{cor-liprop-1} and the definition of the specialization order it follows that also $x_{\li(m)} \in B_n$, which shows that $y \le_\tau x_{\li(m)}$.
\end{proof}

In case that the function $\li \in P^{(1)}$ in Definition~\ref{df-la} also satisfies the condition in Statement~\ref{cor-liprop}(\ref{cor-liprop-2}), we say that the limit algorithm \emph{computes maximal limits}.

It is well known that if space $\mathcal{T}$  satifies  $T_1$ separation  its specialization order coincides with the identity relation on $T$. Under the assumptions of the above corollary we therefore obtain that convergent sequences in \textsc{Seq} that allow delaying must be constant. In other words,  except in the case of $T_0$ spaces that violate the $T_1$ condition, only trivial sequences in \textsc{Seq} satisfy the assumption.

As we will see next, the property in Corollary~\ref{cor-liprop}(\ref{cor-liprop-1}) is characteristic for limit algorithms.
 
\begin{lemma}\label{lm-propli}
Let $\li \in P^{(1)}$ such that for all indices $m$ of convergent sequences in \textsc{Seq}, $\li(m)\conv \in \dom(x)$ with $x_{\li(m)} \in \Lim_a x_{\varphi_m(a)}$ and $x_{\varphi_m(a)} \le_\tau x_{\li(m)}$, for all $a \in \omega$. Then $x$ has a limit algorithm.
\end{lemma}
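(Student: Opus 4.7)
The strategy is to take the given function $\li$ as the candidate limit algorithm and verify that the two conditions of Definition~\ref{df-la} that are not already supplied by hypothesis, namely (\ref{df-la-2}) and (\ref{df-la-3}), follow automatically from the combination of the limit property ($x_{\li(m)} \in \Lim_a x_{\varphi_m(a)}$) and the monotonicity property ($x_{\varphi_m(a)} \le_\tau x_{\li(m)}$). The $T_0$ separation axiom is the bridge that lets each of these conditions be derived: it suffices to show that the relevant two points have identical systems of open neighbourhoods.

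For Condition~\ref{df-la}(\ref{df-la-2}), I would fix an index $m$ of a sequence $(x_{\varphi_m(a)})_a$ that is eventually constantly equal to $y := x_{\varphi_m(\bar a)}$. Monotonicity immediately gives $y \le_\tau x_{\li(m)}$. Conversely, for any $B_n$ with $x_{\li(m)} \in B_n$, the limit property yields some $\bar a' \in \omega$ such that $x_{\varphi_m(a)} \in B_n$ for all $a \ge \bar a'$; evaluating at $a = \max(\bar a, \bar a')$ forces $y \in B_n$. Hence $x_{\li(m)} \le_\tau y$ as well, and $T_0$ separation yields $x_{\li(m)} = y$.

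For Condition~\ref{df-la}(\ref{df-la-3}), assume $\Lim_a x_{\varphi_m(a)} = \Lim_a x_{\varphi_{m'}(a)}$. Then $x_{\li(m)}$ is a limit of $(x_{\varphi_{m'}(a)})_a$, so for every basic open $B_n$ containing $x_{\li(m)}$ there exists $\bar a \in \omega$ with $x_{\varphi_{m'}(a)} \in B_n$ for $a \ge \bar a$. Monotonicity applied to the primed sequence gives $x_{\varphi_{m'}(a)} \le_\tau x_{\li(m')}$ for every $a$, whence $x_{\li(m')} \in B_n$. The symmetric argument shows the reverse implication, so $x_{\li(m)}$ and $x_{\li(m')}$ share the same open neighbourhoods, and the $T_0$ axiom gives $x_{\li(m)} = x_{\li(m')}$.

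The plan involves no serious obstacle; the only subtle point is to realise that the monotonicity hypothesis is precisely the ingredient needed to transport membership in open sets from the sequence elements to the computed ``limit'', which is exactly what is required to upgrade ``some limit'' to ``the canonical (in fact, maximal) limit'' forced by~(\ref{df-la-2}) and~(\ref{df-la-3}). No modification of $\li$ is needed, and no new auxiliary sequence must be constructed; the same $\li$ given in the hypothesis serves as the limit algorithm.
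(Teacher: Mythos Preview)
Your proposal is correct and follows essentially the same route as the paper. The only cosmetic difference is that the paper first isolates the intermediate fact $x_{\li(m)} = \max_{\le_\tau} \Lim_a x_{\varphi_m(a)}$ and then derives Condition~(\ref{df-la-3}) from the uniqueness of maxima, whereas you argue the neighbourhood inclusion for~(\ref{df-la-3}) directly; the underlying limit--then--monotonicity step is identical.
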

\begin{proof}
As in the preceding proof we obtain that $y \le_\tau x_{\li(m)}$, for all $y \in \Lim_a x_{\varphi_m(a)}$. Hence, $x_{\li(m)} = \max_{\le_\tau} \Lim_a x_{\varphi_m(a)}$. If $m'$ is an index of a further converging sequence in \textsc{Seq} so that $\Lim_a x_{\varphi_{m'}(a)} = \Lim_a x_{\varphi_m(a)}$, we therefore have that 
\[
x_{\li(m')} = \max\nolimits_{\le_{\tau}} \Lim_a x_{\varphi_{m'}(a)} = \max\nolimits_{\le_\tau} \Lim_a x_{\varphi_m(a)} = x_{\li(m)}.
\]

If there is some $\bar{a} \in \omega$ such that $\varphi_m(a) = \varphi_m(\bar{a})$, for all $ a \ge \bar{a}$, then $x_{\varphi_m(\bar{a})} \in \Lim_a x_{\varphi_m(a)}$. Thus, $x_{\varphi_m(\bar{a})} \le_\tau x_{\li(m)}$. To see that also the converse inequality holds, let $n \in \omega$ with $x_{\li(m)} \in B_n$.  Since $x_{\li(m)}$ is a limit point of $(x_{\varphi_m(a)})_a$, there is some $\hat{a} \in \omega$ with $x_{\varphi_m(a)} \in B_n$, for all $a \ge \hat{a}$. In particular, we have that $x_{\varphi_m(\bar{a})} \in B_n$, which shows that $x_{\li(m)} \le_\tau x_{\varphi_m(\bar{a})}$.
\end{proof}

\begin{proposition}\label{pn-li}
Let $x$ be computable and all sequences in \textsc{Seq} allow delaying. Then $x$ has a limit algorithm if, and only if, there is some function $\li \in P^{(1)}$ so that for all indices $m$ of convergent sequences in \textsc{Seq},  $\li(m)\conv \in \dom(x)$ with $x_{\li(m)} \in \Lim_a x_{\varphi_m(a)}$ and $x_{\varphi_m(a)} \le_\tau x_{\li(m)}$, for all $a \in \omega$.
\end{proposition}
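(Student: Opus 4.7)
The plan is to prove the biconditional by assembling two earlier results, using the global hypothesis that every sequence in \textsc{Seq} allows delaying to remove the corresponding proviso from Corollary~\ref{cor-liprop}.

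For the forward direction ($\Rightarrow$), assume that $x$ has a limit algorithm witnessed by $\li \in P^{(1)}$ in the sense of Definition~\ref{df-la}. Clauses~\ref{df-la}(\ref{df-la-0}) and \ref{df-la}(\ref{df-la-1}) already give $\li(m)\conv \in \dom(x)$ with $x_{\li(m)} \in \Lim_a x_{\varphi_m(a)}$ for every index $m$ of a convergent sequence in \textsc{Seq}. Because by hypothesis every sequence in \textsc{Seq} allows delaying, the conclusion of Corollary~\ref{cor-liprop}(\ref{cor-liprop-1}) applies to every such $m$, yielding $x_{\varphi_m(a)} \le_\tau x_{\li(m)}$ for all $a \in \omega$. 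Hence the very same $\li$ has the form demanded on the right-hand side of the proposition.

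For the converse direction ($\Leftarrow$), suppose we have $\li \in P^{(1)}$ such that for each index $m$ of a convergent sequence in \textsc{Seq}, $\li(m)\conv \in \dom(x)$, $x_{\li(m)} \in \Lim_a x_{\varphi_m(a)}$, and $x_{\varphi_m(a)} \le_\tau x_{\li(m)}$ for all $a$. These are precisely the hypotheses of Lemma~\ref{lm-propli}, which therefore gives at once that $x$ has a limit algorithm (indeed, one that computes maximal limits, as the proof of \ref{lm-propli} shows).

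There is essentially no obstacle particular to this proposition: the real work has already been done in Proposition~\ref{prop-liprop1} (which underlies Corollary~\ref{cor-liprop}) and in Lemma~\ref{lm-propli}. The proposition just records the observation that, once \emph{all} canonical sequences are required to allow delaying, the monotonicity condition $x_{\varphi_m(a)} \le_\tau x_{\li(m)}$ together with correctness on limits is equivalent to the full list of four clauses in Definition~\ref{df-la}.
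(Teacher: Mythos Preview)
Your proof is correct and matches the paper's approach exactly: the proposition is stated in the paper without its own proof, being an immediate consequence of Corollary~\ref{cor-liprop}(\ref{cor-liprop-1}) for the forward direction (once the delaying hypothesis is made global) and Lemma~\ref{lm-propli} for the converse. You have simply spelled out this assembly explicitly.
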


This gives us a hint of how to construct a numbering of $T$ that has a limit algorithm in the case of $T_0$ spaces that do not satisfy $T_1$ separation. We say that \textsc{Seq} \emph{has maximal limits} if $\Lim_a y_a$ has a greatest element, for each converging sequence $(y_a)_a$ in \textsc{Seq}.

As in the case of effectively pointed spaces, we now let \textsc{Seq} contain only sequences that monotonically increase with respect to the specialization order. Such sequences always allow delaying. As follows from the definition of the specialization order, every sequence element is a limit in this case and the greatest limit point, if it exists, is the least upper bound. 

In addition, we assume that $\prec_B$ is computably enumerable and the neighbourhood filter of each point has an enumerable strong base of basic open sets. As we have seen in Section~\ref{speccase}, the just mentioned spaces always satisfy these assumptions. Let $\bar{x}$ be the numbering constructed in Proposition~\ref{prop-num} and $\li \in R^{(1)}$ with
\[
W_{\li(m)} = \set{n \in \omega}{(\exists a \in \omega) n \in W_{\varphi_m(a)}}.
\]
Suppose that $m$ is an index of a monotonically increasing sequence with a largest limit element $y$. Then all $B_n$ with $n \in W_{\li(m)}$ contain $y$. On the other hand, if $B_n$ contains $y$ then there is some $a \in \omega$ with $x_{\varphi_m(a)} \in B_n$, as $y$ is a limit point. Thus, $n \in W_{\li(m)}$. It follows that $\set{B_n}{n \in W_{\li(m)}}$ is the set of all basic open sets containing $y$ and hence a strong base of $\mathcal{N}(y)$. So, $y = \bar{x}_{\li(m)}$, which shows that $\bar{x}$ has a limit algorithm.

\begin{proposition}\label{prop-t0la}
Let \textsc{Seq} contain only monotonically increasing sequences and have maximal limits. Moreover, let $x$ be acceptable, $\prec_B$ be computably enumerable, and the neighbourhood filter of each point in $T$ have an enumerable strong base of basic open sets. Then $x$ has a limit algorithm.
\end{proposition}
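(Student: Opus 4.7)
The plan is to make rigorous the construction sketched in the paragraph immediately preceding the statement---which exhibits a limit algorithm, call it $\bar{\li}$, for the numbering $\bar{x}$ of Proposition~\ref{prop-num}---and then to transfer that algorithm to the given acceptable $x$ by means of Corollary~\ref{cor-acc-eq}.

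For the first step, given an index $m$ of a convergent sequence $(x_{\varphi_m(a)})_a$ in \textsc{Seq} with maximal limit $y$, the s-m-n theorem supplies a total computable $\bar{\li}$ with $W_{\bar{\li}(m)} = \bigcup_a W_{\varphi_m(a)}$. I would verify that $\{B_n \mid n \in W_{\bar{\li}(m)}\}$ coincides with $\{B_n \mid y \in B_n\}$: if $n$ lies in some $W_{\varphi_m(a)}$, then $x_{\varphi_m(a)} \in B_n$ by computability of $x$, and since $x_{\varphi_m(a)} \le_\tau y$ (every member of a monotonically increasing sequence lies below its maximal limit), $y \in B_n$ as well; conversely, if $y \in B_n$, then $y$ being a limit forces $x_{\varphi_m(a)} \in B_n$ for some $a$, i.e., $n \in W_{\varphi_m(a)}$. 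The strong basis property then makes this collection a strong base of $\mathcal{N}(y)$, so by the definition of $\bar{x}$ we get $\bar{x}_{\bar{\li}(m)} = y$.

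To transfer to $x$, I would note that $\bar{x}$ is acceptable by Proposition~\ref{prop-num}, so Corollary~\ref{cor-acc-eq} (using that $\prec_B$ is computably enumerable) yields $x \equiv \bar{x}$. Fix reductions $f, g \in P^{(1)}$ with $x_i = \bar{x}_{f(i)}$ on $\dom(x)$ and $\bar{x}_j = x_{g(j)}$ on $\dom(\bar{x})$. For an index $m$ of a sequence in \textsc{Seq}, the s-m-n theorem produces an index $m'$ with $\varphi_{m'}(a) = f(\varphi_m(a))$, which is a $\bar{x}$-index of the pointwise-identical sequence. Setting $\li(m) = g(\bar{\li}(m'))$ gives $x_{\li(m)} = \bar{x}_{\bar{\li}(m')}$, the maximal limit point of the sequence under $x$.

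Finally I would check the clauses of Definition~\ref{df-la}. Clauses (\ref{df-la-0}) and (\ref{df-la-1}) are built into the construction. For (\ref{df-la-2}), if the sequence is eventually constantly some $z = x_{\varphi_m(\bar{a})}$, monotonicity together with $T_0$ force $z$ itself to be the maximal limit (the constant tail witnesses that $z$ is a limit, every earlier term is $\le_\tau z$, and any other limit $y$ must satisfy $z \le_\tau y$ and $y \le_\tau z$), giving $x_{\li(m)} = z$. Clause (\ref{df-la-3}) is immediate, since both $\li(m)$ and $\li(m')$ return the greatest element of the same limit set. The only genuine obstacle is the identification in the first step that $\bigcup_a W_{\varphi_m(a)}$ indexes exactly the neighbourhoods of the maximal limit, which relies critically on the monotone-sequence inequality $x_{\varphi_m(a)} \le_\tau y$; once that is in place, the passage through $f$ and $g$ is mechanical.
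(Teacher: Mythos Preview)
Your approach is the paper's: construct a limit algorithm $\bar{\li}$ for the numbering $\bar{x}$ of Proposition~\ref{prop-num}, then transfer it to $x$ via the equivalence supplied by Corollary~\ref{cor-acc-eq}. The paper compresses the transfer to the single remark that ``the property of having a limit algorithm is inherited under equivalence''; you spell this out and check the four clauses of Definition~\ref{df-la}, which is fine.

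One small but genuine slip to fix: in your first step you write $(x_{\varphi_m(a)})_a$ and justify $n \in W_{\varphi_m(a)} \Rightarrow x_{\varphi_m(a)} \in B_n$ ``by computability of $x$''. Taken literally this is incoherent, since for an arbitrary $x$-index $i$ the set $W_i$ bears no relation to the point $x_i$. What you need---and what your transfer step shows you intend---is that $\bar{\li}$ operates on $\bar{x}$-indices: by the very definition of $\bar{x}$, the set $\{B_c : c \in W_{\varphi_m(a)}\}$ is a strong base of $\mathcal{N}(\bar{x}_{\varphi_m(a)})$, so $n \in W_{\varphi_m(a)}$ gives $\bar{x}_{\varphi_m(a)} \in B_n$ directly. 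With that correction the argument goes through. Note also that your claim (shared by the paper's sketch) that $\bigcup_a W_{\varphi_m(a)}$ indexes \emph{all} basic neighbourhoods of $y$ is slightly too strong: from $\bar{x}_{\varphi_m(a)} \in B_n$ one only gets some $c \in W_{\varphi_m(a)}$ with $c \prec_B n$, not $n$ itself. But this is harmless, since a strong base of $\mathcal{N}(y)$ is exactly what the definition of $\bar{x}$ demands.
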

\begin{proof}
By assumption $x$ is computable and $\prec_B$ computably enumerable.  Moreover, as we have seen, the numbering $\bar{x}$ constructed in Proposition~\ref{prop-num} is acceptable and has a limit algorithm. Hence, $\mathcal{T}$ is effective with respect to $x$ as well as $\bar{x}$, because of Lemma~\ref{lem-eff}. With Corollary~\ref{cor-acc-eq} we therefore obtain that $x$ and $\bar{x}$ are equivalent. Obviously, the property of having a limit algorithm is inherited under equivalence.
\end {proof}

\begin{proposition}\label{prop-metla}
Let $\mathcal{M}$ be a constructive metric space. Then every acceptable numbering of $M$ has a limit algorithm.
\end{proposition}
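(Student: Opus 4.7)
The plan is to exploit acceptability of $x$: it suffices to compute, from an index $m$ of a regular Cauchy sequence $(y_a)_a = (x_{\varphi_m(a)})_a$, an index of a normed computable enumeration of basic open sets converging to the limit $y$, and then to apply the function $pt$ witnessing effective limit passing. Uniqueness of $y$ is immediate since $\mathcal{M}$ is Hausdorff, and regularity gives $\delta(y_a, y) \le 2^{-a}$.

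I construct such an enumeration in the form $(B_{\pair{h(a), a}})_a$ for a computable $h$. For each $a$, dovetail a search for a pair $\pair{j, k}$ with $k \ge a + 4$ such that $x_{\varphi_m(k)} \in B_{\pair{j, a+3}}$; this is semi-decidable by computability of $x$. Set $h(a) := j$ for the first such pair found. Termination: by density of $M_0$ there exists $\beta_j$ with $\delta(\beta_j, y) < 2^{-(a+4)}$, whence $\delta(\beta_j, y_{a+4}) < 2^{-(a+3)}$ by regularity, so $\pair{j, a+4}$ qualifies. The chosen $h(a)$ then satisfies
\[
\delta(\beta_{h(a)}, y) \le \delta(\beta_{h(a)}, y_k) + \delta(y_k, y) < 2^{-(a+3)} + 2^{-(a+4)} < 2^{-(a+2)},
\]
so $y \in B_{\pair{h(a), a}}$. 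Strong inclusion $\pair{h(a+1), a+1} \prec_B \pair{h(a), a}$ reduces to $\delta(\beta_{h(a+1)}, \beta_{h(a)}) < 2^{-(a+1)}$, which follows from the triangle inequality: $\delta(\beta_{h(a+1)}, \beta_{h(a)}) < 2^{-(a+3)} + 2^{-(a+2)} < 2^{-(a+1)}$. A standard metric argument (given any $B_{\pair{i,n}}$ containing $y$, pick $a$ large enough that $2^{-(a+2)} + 2^{-a} < 2^{-n} - \delta(\beta_i, y)$) shows $\{B_{\pair{h(a), a}}\}_a$ is a base of $\mathcal{N}(y)$, so $(B_{\pair{h(a), a}})_a$ is a normed computable enumeration converging to $y$ in the sense of Section~\ref{eff}. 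Setting $\li(m) := pt(e(m))$, where $e(m)$ is an index of this enumeration obtained uniformly from $m$ via $s$-$m$-$n$, yields $\li(m)\conv \in \dom(x)$ with $x_{\li(m)} = y$.

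Conditions~(\ref{df-la-0}) and~(\ref{df-la-1}) of Definition~\ref{df-la} are then immediate. Conditions~(\ref{df-la-2}) and~(\ref{df-la-3}) follow automatically from Hausdorffness of $\mathcal{M}$: $\Lim_a x_{\varphi_m(a)}$ is the singleton $\{y\}$, an eventually constant sequence converges to its eventual value, and two convergent sequences with equal limit sets share the same limit. The only real care lies in calibrating the offsets ($+3$ on the inner radius and $+4$ on the index $k$) so that $y$ sits comfortably inside each $B_{\pair{h(a), a}}$ while consecutive centers $\beta_{h(a)}, \beta_{h(a+1)}$ drift slowly enough to satisfy the strict inequality defining $\prec_B$ on the metric basis; this is the only quantitative obstacle and is discharged by the arithmetic above.
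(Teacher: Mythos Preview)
Your proof is correct, but it takes a different route from the paper's.

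The paper first reduces to the specific numbering $\bar{x}$ of Proposition~\ref{prop-num} (using Corollary~\ref{cor-acc-eq}: all acceptable numberings are equivalent, and having a limit algorithm transfers along equivalence). For $\bar{x}$, an index is by construction a c.e.\ index of a strong base of a neighbourhood filter, so one only needs to enumerate \emph{some} strong base of $\mathcal{N}(y)$, not a decreasing chain. The paper does this by collecting \emph{all} pairs $\pair{i,a-1}$ with $a>0$ and $y_a\in B_{\pair{i,a}}$, and then checks the strong-base axioms by a triangle-inequality argument. No normedness is required, and no call to $pt$ is made.

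You instead work with an arbitrary acceptable $x$ directly: you manufacture a \emph{normed} (strictly $\prec_B$-decreasing) enumeration $(B_{\pair{h(a),a}})_a$ by searching, for each $a$, for a dense point $\beta_{h(a)}$ close to a late tail element, calibrating offsets so that consecutive balls nest with the strict inequality defining $\prec_B$; then you feed this into $pt$. This avoids the detour through $\bar{x}$ and the equivalence machinery, at the cost of having to arrange the monotone chain explicitly. Either approach is fine; yours is arguably more self-contained, while the paper's avoids the bookkeeping of producing a decreasing sequence by exploiting the particular shape of $\bar{x}$.
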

\begin{proof}
Again it suffices to show that the numbering $\bar{x}$ constructed in Proposition~\ref{prop-num} has a limit algorithm. Let $(y_a)_a$ be a computable regular Cauchy sequence that converges to some point $y \in M$. Because of regularity we have that $\delta(y_a, y) \le 2^{-a}$, for all $a \in \omega$. Let $a >0$. Since $M_0$ is dense in $M$, there is some $\beta_i \in M_0$ such that $\delta(\beta_i, y_a) < 2^{-a}$. By the triangular inequation it then follows that $\set{u \in M}{\delta(y_a, u) \le 2^{-a}} \subseteq B_{\pair{i,a-1}}$. We need to enumerate a strong base of $\mathcal{N}(y)$. To this end we will enumerate all pairs $\pair{i,a-1}$ with $a > 0$ and $y_a \in B_{\pair{i,a}}$. Let $m$ be an index of $(y_a)_a$. By  definition of $\bar{x}$, $\set{B_d}{d \in W_{\varphi_m(a)}}$ is a strong basis of $\mathcal{N}(\bar{x}_{\varphi_m(a)})$. Therefore, 
\begin{equation}\label{eq-lim}
\bar{x}_{\varphi_m(a)} \in B_{\pair{i,a}} \Longleftrightarrow (\exists d \in W_{\varphi_m(a)})\, d \prec_B \pair{i,a}.
\end{equation}
Hence, if we let $\li \in R^{(1)}$ such that
\[
W_{li(m)} = \set{\pair{i,a-1}}{a > 0 \wedge (\exists d \in W_{\varphi_m(a)})\, d \prec_B \pair{i,a}},
\]
then all basic open set $B_{\pair{i,a-1}}$ with $\pair{i,a-1} \in W_{\li(m)}$ contain the limit point $y$. It remains to show that they form a strong filter base.

Let $\pair{i,a-1}, \pair{j, c-1} \in W_{\li(m)}$.  Then $y \in B_{\pair{i,a-1}} \cap B_{\pair{j,c-1}}$. Since the set of all $B_d$ forms a strong basis of the metric topology, there exist $b,n \in \omega$ such that $y \in B_{\pair{b,n}}$ and $\pair{b,n} \prec_B \pair{i,a-1}$ as well as $\pair{b,n} \prec_B \pair{j,c-1}$. Let $\bar{n} \in \omega$ with $2^{-\bar{n}} < 2^{-n} - \delta(\beta_b,y)$. Moreover, choose $\hat{n} > \bar{n} + 2$ so that $\delta(y_{\hat{n}}, y) \le 2^{-\hat{n}}$ and $e \in \omega$ with $\delta(\beta_e,y_{\hat{n}}) < 2^{-\hat{n}}$ as well. It then follows that
\begin{align*}
\delta(\beta_e,\beta_b) + 2^{-\hat{n}+1} 
&\le \delta(\beta_e,y) + \delta(y,\beta_b) + 2^{-\hat{n}+1}\\
&< 2 \cdot 2^{-\hat{n}+1} + \delta(y,\beta_b)
< 2^{-\bar{n}} + \delta(y,\beta_b)
< 2^{-n},
\end{align*}
which means that $\pair{e,\hat{n}-1} \prec_B \pair{b,n}$. Thus we have that $\pair{e,\hat{n}-1} \prec_B \pair{i,a-1}$ as well as $\pair{e,\hat{n}-1} \prec_B \pair{j, c-1}$. Moreover, as $\bar{x}_{\varphi_m(\hat{n})} \in B_{\pair{e, \hat{n}}} \subseteq B_{\pair{e, \hat{n}-1}}$, we obtain with (\ref{eq-lim}) that $\pair{e, \hat{n}-1} \in W_{\li(m)}$.
\end{proof}

So far we have seen for two important and large classes of effective spaces that acceptable numberings in addition have a limit algorithm. We will now, conversely, study when numberings that have a limit algorithm also allow effective limit passing. 

\begin{proposition}
Let  either $\mathcal{T}$ satisfy  $T_1$ separation or every sequence in \textsc{Seq} allow delaying. Moreover, let $x$ be computable as well as have a limit algorithm. Then $x$ also allows effective limit passing.
\end{proposition}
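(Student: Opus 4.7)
My plan is to set $pt = \li \circ p$, where $p \in R^{(1)}$ is the function from Property~(\ref{prop-seq-2}) of \textsc{Seq} and $\li$ is the given limit algorithm. For $m$ an index of a normed computable enumeration $(B_{\varphi_m(a)})_a$ converging to a point $y \in T$, Property~(\ref{prop-seq-2-1}) places the sequence $(x_{\varphi_{p(m)}(a)})_a$ in \textsc{Seq} with $x_{\varphi_{p(m)}(a)} \in \hl(B_{\varphi_m(a)}) \subseteq B_{\varphi_m(a)}$. Because the enumeration is normed, $(B_{\varphi_m(a)})_a$ is inclusion-decreasing and a base of $\mathcal{N}(y)$, so this sequence converges to $y$ and $y \in \Lim_a x_{\varphi_{p(m)}(a)}$. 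Definition~\ref{df-la}(\ref{df-la-0},\ref{df-la-1}) then produces $\li(p(m))\conv \in \dom(x)$ with $x_{\li(p(m))} \in \Lim_a x_{\varphi_{p(m)}(a)}$, and the whole proof reduces to establishing the equality $x_{\li(p(m))} = y$.

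Under the delaying disjunct, Corollary~\ref{cor-liprop}(\ref{cor-liprop-2}), applied at the limit $y$ of $(x_{\varphi_{p(m)}(a)})_a$, directly gives $y \le_\tau x_{\li(p(m))}$. If $\mathcal{T}$ is additionally not $T_1$, Property~(\ref{prop-seq-2-2}) applies and the maximality argument in the excerpt immediately after Definition~\ref{df-la} yields $y = \max_{\le_\tau} \Lim_a x_{\varphi_{p(m)}(a)}$; hence also $x_{\li(p(m))} \le_\tau y$, and $T_0$-antisymmetry of $\le_\tau$ forces equality. If $\mathcal{T}$ is $T_1$, then $\le_\tau$ is the identity and the single inequality $y \le_\tau x_{\li(p(m))}$ already gives $x_{\li(p(m))} = y$.

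Under the $T_1$ disjunct without delaying, the identity specialization order reduces the goal to $\Lim_a x_{\varphi_{p(m)}(a)} = \{y\}$, and I would argue this by a direct adaptation of the maximality analysis after Definition~\ref{df-la}: any $z \in \Lim_a x_{\varphi_{p(m)}(a)}$ with $z \ne y$ admits, by $T_1$ separation, a basic open set $B_n \in \mathcal{N}(z)\setminus\mathcal{N}(y)$ into which the sequence must eventually fall, while by Property~(\ref{prop-seq-2-1}) each $x_{\varphi_{p(m)}(a)}$ lies in $B_{\varphi_m(a)}$, a member of the shrinking base of $\mathcal{N}(y)$. The main technical obstacle is to supply a contradiction from these two facts without invoking Property~(\ref{prop-seq-2-2}), which is formally stated only for non-$T_1$ spaces; in the intended $T_1$ examples, notably constructive metric spaces, the space is in fact Hausdorff and sequential limits are unique, so this obstacle vanishes and $pt = \li \circ p$ is the desired partial computable function witnessing that $x$ allows effective limit passing.
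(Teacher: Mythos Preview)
Your approach---setting $pt=\li\circ p$ and then arguing $x_{\li(p(m))}=y$ by comparing the two in the specialization order---is exactly the paper's; the paper also establishes $x_{\li(p(m))}\le_\tau y$ from $y=\max_{\le_\tau}\Lim_a x_{\varphi_{p(m)}(a)}$ and then splits into the $T_1$ and the delaying case just as you do.

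Two remarks. First, the inclusion $\hl(B_{\varphi_m(a)})\subseteq B_{\varphi_m(a)}$ is in the wrong direction: by definition $\hl(B_n)=\bigcap\{B_a:n\prec_B a\}$ and each such $B_a$ \emph{contains} $B_n$, so $\hl(B_n)\supseteq B_n$. The convergence of $(x_{\varphi_{p(m)}(a)})_a$ to $y$ is still correct, but the reason is that for $a'>a$ the normedness gives $\varphi_m(a')\prec_B\varphi_m(a)$, whence $\hl(B_{\varphi_m(a')})\subseteq B_{\varphi_m(a)}$, and $\{B_{\varphi_m(a)}\}_a$ is a base of $\mathcal{N}(y)$; this is the argument the paper records just before Definition~\ref{df-la}.

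Second, on the obstacle you flag in the $T_1$-without-delaying case: the paper does not supply an independent argument here either. It simply cites the earlier maximality statement $y=\max_{\le_\tau}\Lim_a x_{\varphi_{p(m)}(a)}$ (whose proof invokes Property~(\ref{prop-seq-2-2})) and then uses that $\le_\tau$ is the identity under $T_1$. Your reading---that Property~(\ref{prop-seq-2-2}) is formally only imposed in the non-$T_1$ case, so that an extra word is needed for $T_1$ spaces that are not Hausdorff---is accurate, and your resolution via uniqueness of limits in the intended Hausdorff examples matches the spirit of the paper.
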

\begin{proof}
Let $\li \in P^{(1)}$ witness that $x$ has a limit algorithm, and $p \in R^{(1)}$ be as in Property~\ref{prop-seq-2} of \textsc{Seq}. If $m$ is an index of a normed computable enumeration of basic open sets converging to $y \in T$, then $ y = \max_{\le_\tau} \Lim_a x_{\varphi_{p(m)}(a)}$, as we have already seen. Thus, $x_{\li(p(m))} \le_\tau y$. 

If $\mathcal{T}$ satisfies $T_1$ separation, the specialization order coincides with the identity on $T$. Hence, $y =  x_{\li(p(m))}$ in this case. In the other case, every sequence in \textsc{Seq} allows delaying. As a consequence of Corollary~\ref{cor-liprop}(\ref{cor-liprop-2}) we therefore again have that  $y = x_{\li(p(m))}$. So, $\pt = \li \circ p$ witnesses that $x$ allows effective limit passing.
\end{proof}

\section{Continuity}\label{cont}

By definition, a sequence $(y_a)_a$ converges to a point $y$, if for any $n \in \omega$ with $y \in B_n$ there is some $N^y_n \in \omega$ with $y_a \in B_n$, for all $a \ge N^y_n$. If $y' \le_\tau y$, then $(y_a)_a$ converges to $y'$ as well and we can take $N^{y'}_n = N^y_n$. Hence, if $(y_a)_a$ has a maximal limit point $z$, we can choose $N^y_n = N^z_n$, for all $y \in \Lim_a y_a$.  We call any function that maps $n$ with $B_n \cap \Lim_a y_a \not= \emptyset$ to some $N_n$ with $y_c \in B_n$, for all $c \ge N_n$, a \emph{uniform convergence module} of $(y_a)_a$.

\begin{definition}
A sequence $(y_a)_a$ of elements of $T$ \emph{converges computably}, if $\Lim_a y_a \not= \emptyset$ and $(y_a)_a$ has a computable uniform convergence module, i.e., there is some function $k \in P^{(1)}$ such that for all $n \in\omega$ with $B_n \cap \Lim_a y_a \not= \emptyset$ it follows that $k(n)\conv$ and $y_c \in B_n$, for all $c \ge k(n)$.
\end{definition}

Let $m$ be an index of a computable normed enumeration of basic open sets converging to $y \in B_n$. Then, for all $a > 0$, $x_{\varphi_{p(m)}(a)} \in B_{\varphi_m(a-1)}$, where $p \in R^{(1)}$ is as in Property~\ref{prop-seq-2} of \textsc{Seq}. Assume that $\prec_B$ is computably enumerable and set 
\[
A = \set{\pair{a', m', n'}}{a' > 0  \wedge \varphi_{m'}(a'-1) \prec_B {n'}}.
\]
As $\set{B_a}{a \in \range(\varphi_m)}$ is a strong basis of $\mathcal{N}(y)$, $A$ is not empty. With respect to some fixed enumeration, let $\pair{\bar{a}, \overline{m}, \bar{n}}$ be the first element enumerated in $A$ with $\overline{m} = m$ and $\bar{n} = n$. Set $\varphi_{k(m)}(n) = \bar{a}$. Then $\varphi_{k(m)}$ witnesses that $(x_{\varphi_{p(m)}(a)})_a$ converges computably to $y$, uniformly in $m$.

\begin{lemma}\label{lm-econvcnseq}
Let $\prec_B$ be computably enumerable, $p \in R^{(1)}$ as in Property~\ref{prop-seq-2} of \textsc{Seq}, and $m$ an index of a converging normed computable enumeration of basic open sets. Then the associated sequence $(x_{\varphi_{p(m)}(a)})_a$ of points converges computably, uniformly in $m$.
\end{lemma}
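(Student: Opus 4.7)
The plan is essentially to formalize the construction already sketched in the paragraph preceding the lemma. First, I would combine Property~\ref{prop-seq-2}(\ref{prop-seq-2-1}) of \textsc{Seq} with the fact that $\varphi_m$ is $\prec_B$-decreasing: for every $a > 0$, $x_{\varphi_{p(m)}(a)} \in \hl(B_{\varphi_m(a)})$ and $\varphi_m(a) \prec_B \varphi_m(a-1)$, so by definition of $\hl$ we get $x_{\varphi_{p(m)}(a)} \in B_{\varphi_m(a-1)}$. By transitivity of $\prec_B$, the inclusion propagates: whenever $\varphi_m(\bar a - 1) \prec_B n$ for some $\bar a > 0$ and some $n \in \omega$, we have $x_{\varphi_{p(m)}(c)} \in B_n$ for all $c \ge \bar a$.

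Second, I would identify the $n$'s for which the uniform convergence module must produce a value. Let $y$ denote the point to which the normed enumeration converges. By Property~\ref{prop-seq-2}(\ref{prop-seq-2-1}), $y \in \Lim_a x_{\varphi_{p(m)}(a)}$, and the discussion immediately after Definition~\ref{df-la} shows that Property~\ref{prop-seq-2}(\ref{prop-seq-2-2}) makes $y$ the $\le_\tau$-maximum of this limit set. By the very definition of the specialization order, it follows that for every $n$ with $B_n \cap \Lim_a x_{\varphi_{p(m)}(a)} \ne \emptyset$ one already has $y \in B_n$; so it suffices to produce a module defined on such $n$.

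Third, I would carry out the effective search. Because $\prec_B$ is computably enumerable, the set
\[
A = \set{\pair{a',m',n'}}{a' > 0 \wedge \varphi_{m'}(a'-1) \prec_B n'}
\]
is c.e.\ uniformly; and whenever $y \in B_n$ the strong-base property of $\set{B_{\varphi_m(i)}}{i \in \omega}$ for $\mathcal{N}(y)$ supplies some $i$ with $\varphi_m(i) \prec_B n$, so that $\pair{i+1,m,n} \in A$. The $s$-$m$-$n$ theorem then packages ``enumerate $A$, wait for the first triple of shape $\pair{\cdot,m,n}$, output its first coordinate'' as $\varphi_{k(m)}(n)$ with $k \in R^{(1)}$. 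Combining this with the inclusion established in the first paragraph, $\varphi_{k(m)}$ is a computable uniform convergence module for $(x_{\varphi_{p(m)}(a)})_a$, and the whole construction is uniform in $m$ by design.

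The only mildly delicate point is the second step: without the fact that $y = \max_{\le_\tau}\Lim_a x_{\varphi_{p(m)}(a)}$, one could worry that some $B_n$ meets the limit set without containing $y$, in which case the search might loop. Once that observation is in place everything else is routine bookkeeping around the given sketch.
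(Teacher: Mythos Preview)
Your proposal is correct and follows essentially the same approach as the paper: the paper's proof is exactly the sketch in the paragraph preceding the lemma, using the c.e.\ set $A$ and the search for the first matching triple $\pair{\bar a,m,n}$ to define $\varphi_{k(m)}(n)$. Your explicit treatment of why it suffices to handle only those $n$ with $y \in B_n$ (via the $\le_\tau$-maximality of $y$ in the limit set) makes precise a point the paper relegates to the surrounding discussion, but the underlying argument is the same.
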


We say that $\mathcal{T}$ has a \emph{uniformly computable convergence module}, if there is a function $\cm \in R^{(1)}$ such that for all indices $m$ of computably converging sequences in \textsc{Seq}, $\varphi_{\cm(m)}$ is a corresponding uniform convergence module.

Classically, a map $\fun{F}{T}{T'}$ is sequentially continuous, if for all sequences $(y_a)_a$ in $T$, $F(\Lim_a y_a) \subseteq \Lim_a F(y_a)$. It follows that $F(\max_{\le_\tau} \Lim_a y_a) \le_{\tau'} \max_{\le_{\tau'}} \Lim_a F(y_a)$, if both maxima exist. If $F$ is monotone with respect to the specialization order and $\max_{\le_\tau} \Lim_a y_a$ is an upper bound of all sequence elements $y_a$, also the converse inequality holds.

In the effective version of sequential continuity we distinguish whether when computing an approximation of $F(y)$, for some $y \in \Lim_a y_a$, the existence of a computable uniform convergence module for $(y_a)_a$ is used or not.

\begin{definition}\label{df-effseqcon}
A map $\fun{F}{T}{T'}$ is 
\begin{enumerate}
\item \emph{strongly effectively sequentially continuous}, if for every converging sequence $(y_a)_a$ in \textsc{Seq}, the sequence $(F(y_a))_a$ converges computably with $\Lim_a F(y_a) \supseteq F(\Lim_a y_a)$, uniformly in any index $m$ of $(y_a)_a$, i.e., there is some function $k' \in R^{(1)}$ so that $\lambda n.\, \varphi_{k'(m)}(n)$ witnesses the computable convergence of $(F(y_a))_a$. 

\item \emph{effectively sequentially continuous}, if for every computably converging sequence $(y_a)_a$ in \textsc{Seq} with computable uniform convergence module $k \in P^{(1)}$, the sequence $(F(y_a))_a$ converges computably with $\Lim_a F(y_a) \supseteq F(\Lim_a y_a)$, uniformly in any index $m$ of $(y_a)_a$ and any G\"odel number of $k$.
\end{enumerate}
\end{definition}  

Obviously, every strongly effectively sequentially continuous map is effectively sequentially continuous. As for computably indexed effectively pointed spaces as well as for constructive metric spaces converging sequences in \textsc{Seq} always converge computably with a uniformly computable convergence module, both notions coincide in these cases.

For topological spaces with countable topological basis it is well known that sequentially continuous maps are continuous, and vice versa. In this section we will study this relationship in the effective context described so far.

\begin{definition}\label{df-effcon}
 A map $\fun{F}{T}{T'}$  is said to be
\begin{enumerate}
\item\label{df-effcon-1} \emph{effectively pointwise continuous}, if there is a function $h \in P^{(2)}$ such that for all $i \in \dom(x)$ and $n \in \omega$ with $F(x_i) \in B'_n$, $h(i,n )\conv$, $x_i \in B_{h(i,n)}$, and $F(B_{h(i,n)}) \subseteq B'_n$;
\item\label{df-effcon-2} \emph{effectively continuous}, if there is a function $g \in  R^{(1)}$ such that for all $n \in \omega$,  $F^{-1}(B'_n)= \bigcup\set{B_a}{a \in W_{g(n)}}$.
\end{enumerate}
\end{definition}  

The interrelationship between effective and effective pointwise continuity was investigated in (Spreen 1998, 2000). 

\begin{proposition}\label{prop-cont-ptcont}
Let $x$ be computable. Then every effectively continuous mapping $\fun{F}{T}{T'}$ is effectively pointwise continuous.
\end{proposition}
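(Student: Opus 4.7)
The plan is to combine the computable enumerability witness of the numbering $x$ with the effective continuity data to perform a dovetailed search. Because $x$ is computable, fix a computably enumerable set $L \subseteq \omega$ such that, for all $i \in \dom(x)$ and all $a \in \omega$,
\[
\pair{i,a} \in L \Longleftrightarrow x_i \in B_a.
\]
Because $F$ is effectively continuous, fix $g \in R^{(1)}$ with $F^{-1}(B'_n) = \bigcup\set{B_a}{a \in W_{g(n)}}$ for every $n \in \omega$.

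The idea is that once we know $F(x_i) \in B'_n$, the point $x_i$ must lie in $B_a$ for \emph{some} $a$ enumerated into $W_{g(n)}$, and any such $a$ is a legitimate value for $h(i,n)$: for then $x_i \in B_a$ and $B_a \subseteq F^{-1}(B'_n)$ so that $F(B_a) \subseteq B'_n$. We therefore define $h \in P^{(2)}$ by dovetailing: for input $(i,n)$, simultaneously enumerate $W_{g(n)}$ and $L$, and output the first number $a$ observed to be in $W_{g(n)}$ for which $\pair{i,a}$ has appeared in $L$. Formally one sets
\[
h(i,n) \simeq a \text{ for the least stage-indexed $a$ with $a \in W_{g(n)}$ and $\pair{i,a} \in L$},
\]
which is a partial computable function by standard closure properties of the c.e.\ sets.

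It remains to verify the required three properties whenever $F(x_i) \in B'_n$. From $F(x_i) \in B'_n$ we get $x_i \in F^{-1}(B'_n) = \bigcup\set{B_a}{a \in W_{g(n)}}$, so there exists $a_0 \in W_{g(n)}$ with $x_i \in B_{a_0}$, i.e., $\pair{i,a_0} \in L$. Hence the dovetailed search terminates, witnessing $h(i,n)\conv$. By construction, $h(i,n) \in W_{g(n)}$ and $\pair{i,h(i,n)} \in L$, so $x_i \in B_{h(i,n)}$. Finally, $h(i,n) \in W_{g(n)}$ gives $B_{h(i,n)} \subseteq F^{-1}(B'_n)$, i.e., $F(B_{h(i,n)}) \subseteq B'_n$.

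There is no real obstacle in this argument; the only conceptual point worth highlighting is that the computability assumption on $x$ is exactly what converts the set-theoretic statement ``$x_i$ lies in some $B_a$ from the union'' into an effective search, and without it the corresponding $a$ would exist but not be computable uniformly in $(i,n)$.
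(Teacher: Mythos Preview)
Your argument is correct. The paper itself does not supply a proof of this proposition; it merely states the result and refers to (Spreen 1998, 2000) for the investigation of the relationship between effective and effective pointwise continuity. Your dovetailing argument---searching for an $a$ that is simultaneously enumerated into $W_{g(n)}$ and satisfies $\pair{i,a}\in L$---is exactly the standard proof one would expect, and it is essentially the argument given in the cited sources. There is nothing to add.
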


For the converse implication stronger requirements are needed. Among others, strong inclusion must be computably enumerable\footnote{This condition was not mentioned when stating the result in (Spreen 1998, 2000), but used in the proof.} and $\mathcal{T}$ \emph{recursively separable}, which means that $T$ has to contain an enumerable dense subset. 

\begin{proposition}\label{prop-ptcont-cont}
Let $\mathcal{T}$ be effective and recursively separable such that $\prec_B$ is computably enumerable. Moreover, let $x$ be acceptable and $x'$ computable. Then every effective and effectively pointwise continuous map $\fun{F}{T}{T'}$ is also effectively continuous.
\end{proposition}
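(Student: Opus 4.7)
The plan is to explicitly construct a total computable $g$ witnessing effective continuity of $F$, using acceptability of $x$ to cover points of $T$ beyond the dense subset supplied by recursive separability. First I would fix a tracker $f\in P^{(1)}$ for $F$; a pointwise-continuity witness $h\in P^{(2)}$; a limit-passing function $pt\in P^{(1)}$ for $x$; the c.e.\ set $L'$ from the computability of $x'$; and, by Lemma~\ref{lem-eff} and Lemma~\ref{lm-normseq}, a total $q\in R^{(1)}$ producing, for each $i\in\dom(x)$, an index of a normed computable enumeration of basic open sets converging to $x_i$.

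Next I would define $g$ so that $W_{g(n)}$ contains $m$ precisely when there exist $e,c\in\omega$ such that $\varphi_e(0),\ldots,\varphi_e(c)$ all converge in a $\prec_B$-decreasing chain ending in $m=\varphi_e(c)$, $pt(e)\conv=i$ with $f(i)\conv=j$ and $\pair{j,n}\in L'$, $h(i,n)\conv=m_0$, and $m\prec_B m_0$. Because $\prec_B$ and $L'$ are c.e., all clauses are c.e.\ uniformly in $n$, so $g$ is total computable. For \emph{soundness}, if $(e,c)$ is a witness pair in which $\varphi_e$ is a valid normed enumeration converging to some $y$, then acceptability yields $i\in\dom(x)$ with $x_i=y$ and $F(y)\in B'_n$, effective pointwise continuity at $i$ gives $F(B_{m_0})\subseteq B'_n$, and $m\prec_B m_0$ forces $B_m\subseteq F^{-1}(B'_n)$.

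For \emph{completeness}, given $y\in F^{-1}(B'_n)$, I would pick any $i_0\in\dom(x)$ with $x_{i_0}=y$ and set $e:=q(i_0)$, a valid normed enumeration converging to $y$. Acceptability supplies $i=pt(e)$ with $x_i=y$, and then $F(x_i)\in B'_n$ certifies $h(i,n)=m_0$ with $y\in B_{m_0}$ and $F(B_{m_0})\subseteq B'_n$. Because the $B_{\varphi_e(c')}$ form a strong base of $\mathcal{N}(y)$, some $c$ satisfies $\varphi_e(c)\prec_B m_0$ and $y\in B_{\varphi_e(c)}$; the pair $(e,c)$ then certifies $m:=\varphi_e(c)\in W_{g(n)}$ with $y\in B_m$, as required.

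The main obstacle is soundness on enumeration witnesses $(e,c)$ in which $\varphi_e$ fails to be a valid normed convergent enumeration. On such junk inputs $pt$ is unconstrained and may still converge to some $i\notin\dom(x)$, potentially allowing the checks through $f$, $L'$, $h$, and $\prec_B$ to succeed formally while $B_m\not\subseteq F^{-1}(B'_n)$. I expect to resolve this by working with the canonical $pt$ attached to the acceptable numbering $\bar{x}$ from Proposition~\ref{prop-num}, defined via $W_{pt(e)}=\range(\varphi_e)$, and transferring back to $x$ via Corollary~\ref{cor-acc-eq}; for this canonical $pt$ the $\prec_B$-decreasing chain in $\varphi_e$, combined with the neighbourhoods $B_{h(d(j),n)}$ supplied by recursive separability for dense indices $d(j)$, can be used to decidably certify that any enumerated $m$ actually lies inside $F^{-1}(B'_n)$.
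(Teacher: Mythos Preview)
The paper does not actually prove this proposition; it is quoted from (Spreen 1998, 2000) with the parenthetical remark that the hypothesis ``$\prec_B$ computably enumerable'' was used in the original proof. So there is no in-paper proof to compare against, and I can only assess your argument on its own merits.

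Your main construction has the gap you yourself flag, and your proposed repair does not close it. Passing to the canonical numbering $\bar{x}$ of Proposition~\ref{prop-num} does not help: for that numbering, $pt(e)$ (via $W_{t(e)}=\range(\varphi_e)$) lands in $\dom(\bar{x})$ exactly when $\range(\varphi_e)$ happens to index a strong base of \emph{some} neighbourhood filter, and this is neither decidable nor semidecidable from a finite $\prec_B$-chain $\varphi_e(0)\succ_B\cdots\succ_B\varphi_e(c)$. A finite decreasing prefix tells you nothing about whether the full $\varphi_e$ is total, normed, or convergent; on junk $e$ the values $pt(e)$, $f(pt(e))$, $h(pt(e),n)$ may all converge to garbage satisfying your side conditions, and your enumerated $m$ need not satisfy $B_m\subseteq F^{-1}(B'_n)$. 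The vague phrase ``combined with the neighbourhoods $B_{h(d(j),n)}$ \ldots\ can be used to decidably certify'' does not describe an algorithm.

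A more telling symptom: your primary construction never uses recursive separability---it appears only in the hand-waved fix. In the approach of (Spreen 1998) the dense enumeration $d$ is used from the start precisely to avoid the junk-input problem: one applies $h$ only to indices $d(j)$, which are guaranteed to lie in $\dom(x)$, so soundness is automatic. The real work then shifts to \emph{completeness}, i.e.\ showing that the basic opens produced at dense points already cover all of $F^{-1}(B'_n)$; this is where acceptability of $x$ and the c.e.\ strong inclusion $\prec_B$ are combined nontrivially. Your plan inverts the difficulty: you make completeness easy (via $q(i_0)$) at the cost of an unsolved soundness problem. I would recommend restarting from the dense-point side and consulting the cited 1998 paper for the completeness argument.
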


\begin{proposition}\label{prop-cont-eff}
Let $\mathcal{T'}$ be effective such that $\prec_{B'}$ is computably enumerable. Moreover, let $x$ be computable and $x'$ allow effective limit passing. Then every effectively continuous map $\fun{F}{T}{T'}$ is effective.
\end{proposition}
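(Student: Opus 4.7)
The plan is to exploit the effective--limit-passing function $pt' \in P^{(1)}$ supplied by $x'$: given $i \in \dom(x)$, I shall uniformly compute an index of a normed computable enumeration of basic open sets converging to $F(x_i)$, so that $pt'$ applied to this index yields an $x'$-index of $F(x_i)$, and thereby a tracker for $F$.

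Since $F$ is effectively continuous via some $g \in R^{(1)}$ and $x$ is computable, the equivalence
\[
F(x_i) \in B'_n \Longleftrightarrow (\exists a \in W_{g(n)})\, x_i \in B_a
\]
shows that the set $I_i = \set{n \in \omega}{F(x_i) \in B'_n}$ is computably enumerable, uniformly in $i \in \dom(x)$. Let $(n_k)_{k \in \omega}$ denote a uniformly effective enumeration of $I_i$. Define a total function $h$ recursively by $h(0) = n_0$ and
\[
h(k+1) = \mu m.\, \bigl[m \in I_i \wedge m \prec_{B'} h(k) \wedge m \prec_{B'} n_k\bigr],
\]
where each conjunct is verified by dovetailing the enumerations of $I_i$ and of $\prec_{B'}$. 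Since $F(x_i) \in B'_{h(k)} \cap B'_{n_k}$ and $\mathcal{B'}$ is a strong basis, some $m$ with the three required properties exists and automatically lies in $I_i$; hence the search terminates. By the s-m-n theorem, an index of $h$ is computed from $i$ by some $s \in R^{(1)}$.

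By construction $h(k+1) \prec_{B'} h(k)$, so $(B'_{h(k)})_k$ is normed. Moreover, $\set{B'_{h(k)}}{k \in \omega}$ forms a strong base of $\mathcal{N}(F(x_i))$: the filtering condition follows via transitivity of $\prec_{B'}$ by choosing $h(\max(k_1,k_2)+1)$, and the subbase condition holds because any $n$ with $F(x_i) \in B'_n$ appears as some $n_k$, so that $h(k+1) \prec_{B'} n$ with $B'_{h(k+1)} \in \mathcal{N}(F(x_i))$. Effective limit passing therefore yields $pt'(s(i))\conv \in \dom(x')$ with $x'_{pt'(s(i))} = F(x_i)$, so $f = pt' \circ s$ tracks $F$, establishing that $F$ is effective.

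The principal obstacle is the termination of the recursive search defining $h(k+1)$. This rests squarely on the strong basis property of $\mathcal{B'}$ (which guarantees existence of a common $\prec_{B'}$-refinement inside $I_i$) together with the computable enumerability of $\prec_{B'}$ (which makes the refinement relation algorithmically testable); the effectiveness hypothesis on $\mathcal{T'}$ is what ties these two ingredients together into a uniform procedure.
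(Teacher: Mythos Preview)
Your argument is correct and follows the natural route: enumerate the index set $I_i=\{n:F(x_i)\in B'_n\}$ (c.e.\ uniformly in $i$ by effective continuity of $F$ and computability of $x$), refine it into a normed enumeration using the strong basis property of $\mathcal{B}'$ together with the c.e.\ relation $\prec_{B'}$, and then apply $pt'$. The paper itself does not supply a proof of this proposition but cites Spreen (1998, 2000); your construction is essentially the standard one given there, namely an adaptation of the mechanism behind Lemma~\ref{lm-normseq} to the image point $F(x_i)$.

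One small correction to your closing commentary: your proof does \emph{not} actually use the hypothesis that $\mathcal{T}'$ is effective. The function $sb'$ witnessing effectiveness would require an $x'$-index of $F(x_i)$ as input, which is precisely what you do not yet have. What makes your search for $h(k+1)$ terminate is the bare (non-effective) strong basis property of $\mathcal{B}'$---always assumed in the paper---which guarantees the existence of a common $\prec_{B'}$-refinement lying in $I_i$; computable enumerability of $\prec_{B'}$ and of $I_i$ then ensures that a dovetailed search will eventually find one. So your argument in fact establishes a slight strengthening of the proposition, with the effectiveness of $\mathcal{T}'$ dropped. (Also, since the search condition is merely c.e.\ rather than decidable, the ``$\mu m$'' notation should be read as ``first $m$ found by dovetailing'' rather than ``least $m$''; you say this in words, but it is worth making explicit.)
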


\begin{corollary}\label{cor-effcont}
Let $\mathcal{T}$ and $\mathcal{T'}$ be effective with $\prec_B$ and $\prec_{B'}$ being computably enumerable. Moreover, let $x$ and $x'$ be acceptable, and $\mathcal{T}$ recursively enumerable. Then, for every map $\fun{F}{T}{T'}$, $F$ is effectively continuous if, and only if, it is effectively pointwise continuous and effective.
\end{corollary}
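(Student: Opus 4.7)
The plan is to derive this corollary as a direct consequence of Propositions~\ref{prop-cont-ptcont}, \ref{prop-cont-eff}, and \ref{prop-ptcont-cont}, using the observation that acceptability of a numbering packages together the two hypotheses used separately in those propositions, namely computability and effective limit passing. (I read ``$\mathcal{T}$ recursively enumerable'' as a slip for ``$\mathcal{T}$ recursively separable'', since that is the hypothesis required by Proposition~\ref{prop-ptcont-cont} and no notion of r.e.\ space has been introduced.)

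For the forward implication, suppose $F$ is effectively continuous. First I would apply Proposition~\ref{prop-cont-ptcont}: its only hypothesis is computability of $x$, which is part of acceptability, so $F$ is effectively pointwise continuous. Next I would apply Proposition~\ref{prop-cont-eff} to get that $F$ is effective; its four hypotheses ($\mathcal{T'}$ effective, $\prec_{B'}$ computably enumerable, $x$ computable, $x'$ allowing effective limit passing) are all at hand, the last two again by acceptability of $x$ and of $x'$.

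For the backward implication, suppose $F$ is effective and effectively pointwise continuous. Here the tool is Proposition~\ref{prop-ptcont-cont}, whose hypotheses are: $\mathcal{T}$ effective and recursively separable, $\prec_B$ computably enumerable, $x$ acceptable, and $x'$ computable. Every one of these is either part of the assumptions of the corollary or follows from acceptability of $x'$. Applying the proposition yields effective continuity of $F$.

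There is no genuine obstacle in the argument; the corollary is purely an organizational assembly of the previously established results, and the only step that requires any attention is checking that the hypothesis list of each invoked proposition is covered, most notably that acceptability of the two numberings supplies both the computability and the effective-limit-passing requirements wherever they appear.
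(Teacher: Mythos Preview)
Your proposal is correct and matches the paper's intent: the corollary is stated there without proof precisely because it is the evident combination of Propositions~\ref{prop-cont-ptcont}, \ref{prop-ptcont-cont}, and \ref{prop-cont-eff}, with acceptability unpacked into computability plus effective limit passing. Your reading of ``recursively enumerable'' as ``recursively separable'' is also the right fix.
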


Let us first see how the continuity of sequentially continuous maps is usually shown, in a non-effective context. Assume to this end that $\mathcal{T} = (T, \tau)$ and $\mathcal{T'} = (T', \tau')$, respectively, are $T_0$ spaces with countable bases $\mathcal{B}$ and $\mathcal{B'}$, let $\fun{F}{T}{T'}$ be a sequentially continuous map, and $y$ a point in $T$.
\begin{itemize}
\item First, one uses the countability of $\mathcal{B}$ to construct a sequence of basic open sets $U_0 \supseteq U_1 \supseteq \dotsb\,$ that forms a basis of the neighbourhood filter of $y$.

\item Next, one assumes that $F$ is not continuous. Hence, there is some basic open set $V$ containing $F(y)$ such that $F(U_a) \not\subseteq V$, for all $a \in \omega$. 

\item Using the Axiom of Choice, one then selects some point $y_a \in U_a$, for each $a \in \omega$, such that $F(y_a) \not\in V$. 

\item It follows that $(y_a)_a$ converges to $y$, but $(F(y_a))_a$ does not converge to $F(y)$, a contradiction.
\end{itemize}

Now assume that $B$ and $B'$ are total numberings of $\mathcal{B}$ and $\mathcal{B'}$, respectively, and that $T$ and $T'$ are both countable with numberings $x$ and $x'$, respectively. Moreover, suppose that $\mathcal{T}$ is effective and $x$ is computable. Then, by Lemma~\ref{lm-normseq}, there is some function $q \in R^{(1)}$ such that, for each $i \in \dom(x)$, $q(i)$ is an index of a normed computable enumeration of basic open set converging to $x_i$. In particular, we have that $B_{\varphi_{q(i)}(0)} \supseteq B_{\varphi_{q(i)}(1)} \supseteq \dotsb\,$.

In the second step, assuming that 
\begin{equation*}\label{eq-ninc}
B_{\varphi_{q(i)}(a)} \not\subseteq F^{-1}(B'_n)
\end{equation*}
we need to be able to effectively find a witness $y_a$ for this, uniformly in $n$, $a$ and $i$. It is here where we need $F$ to have a(n appropriate) witness for noninclusion. 
As we have seen in Corollary~\ref{cor-pt-wit}, effective maps $\fun{F}{T}{T'}$ have an appropriate witness for noninclusion, if $\mathcal{T}$ is effectively pointed. In \cite[Theorem 4.1]{sy84} an analogous result was shown for the case that $\mathcal{T}$ is recursively separable and $\mathcal{T'}$  a constructive metric space. In addition,  $x$ must be computable and have a limit algorithm, and $x'$ must be \emph{co-computable}, which means that for every $n \in \omega$, $\ext(B'_n)$, the exterior of $B'_n$, is completely enumerable, uniformly in $n$. However, as follows from an example in \cite{fr58}, this is not the case, in general.

\begin{theorem}\label{th-seqccont}
Let $\mathcal{T}$ be effective and $x$  computable. Then every strongly effectively sequentially continuous map $\fun{F}{T}{T'}$ that has an appropriate witness for noninclusion is effectively pointwise continuous.
\end{theorem}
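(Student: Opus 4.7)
My plan is to construct $h$ by using the convergence module supplied by strong effective sequential continuity on the canonical sequence converging to $x_i$, and then to verify the required inclusion by contradiction via the appropriate witness together with a recursion-theoretic fixed point. The preparatory step is to assemble standard machinery: by Lemma~\ref{lm-normseq} fix $q \in R^{(1)}$ so that $q(i)$ indexes a normed computable enumeration of basic open sets converging to $x_i$ for each $i \in \dom(x)$; by Property~\ref{prop-seq-2} of \textsc{Seq} fix $p \in R^{(1)}$ so that $\varphi_{p(q(i))}$ indexes a sequence in \textsc{Seq} satisfying $x_{\varphi_{p(q(i))}(a)} \in \hl(B_{\varphi_{q(i)}(a)})$ and converging to $x_i$; by strong effective sequential continuity fix $k' \in R^{(1)}$ so that $\varphi_{k'(m)}$ is a uniform convergence module for the image of any convergent sequence in \textsc{Seq} with index $m$; and let $(s, r)$ be the given appropriate noninclusion witness.

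With this machinery in place I define $h$. Given $i \in \dom(x)$ and $n$ with $F(x_i) \in B'_n$, the image sequence $(F(x_{\varphi_{p(q(i))}(a)}))_a$ converges computably with $F(x_i) \in B'_{s(i,n)}$ as one of its limits, so $\varphi_{k'(p(q(i)))}(s(i,n))$ converges to some $N$, and $F(x_{\varphi_{p(q(i))}(a)}) \in B'_{s(i,n)} \subseteq B'_n$ for all $a \geq N$. Set $h(i, n) := \varphi_{q(i)}(N)$. Clearly $h$ is partial computable, $h(i,n)\conv$ whenever $F(x_i) \in B'_n$, and $x_i \in B_{h(i,n)}$ because the enumeration $\varphi_{q(i)}$ is normed and converges to $x_i$.

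To verify $F(B_{h(i,n)}) \subseteq B'_n$ I argue by contradiction. Suppose $F(B_{\varphi_{q(i)}(N)}) \not\subseteq B'_n$; since $B'_{s(i,n)} \subseteq B'_n$ this entails $F(B_{\varphi_{q(i)}(N)}) \not\subseteq B'_{s(i,n)}$ as well, so the witness gives $r(i, \varphi_{q(i)}(N), n)\conv$ to some $R$ with $w := x_R \in \hl(B_{\varphi_{q(i)}(N)}) \setminus F^{-1}(B'_{s(i,n)})$; in particular $F(w) \notin B'_{s(i,n)}$. Appropriateness makes the swap sequence $\sigma$ defined by $\sigma_c = x_{\varphi_{p(q(i))}(c)}$ for $c < N$ and $\sigma_c = w$ for $c \geq N$ belong to \textsc{Seq}, and strong effective sequential continuity supplies a uniform convergence module $\varphi_{k'(m_\sigma)}$ for the image $(F(\sigma_c))_c$, which is eventually constant at $F(w)$.

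The hard part will be extracting an explicit contradiction at this point, because the limit set of $(F(\sigma_c))_c$ equals $\set{z}{z \le_{\tau'} F(w)}$ and, by upward closure of open sets, misses $B'_{s(i,n)}$; the convergence module is therefore not a priori obliged to return a valid bound on input $s(i,n)$. I plan to sidestep this via the recursion theorem: construct an index $\bar m$ whose associated sequence $\varphi_{\bar m}$ is itself the appropriate swap sequence with transition point $e = \varphi_{k'(\bar m)}(s(i,n))$. At the fixed point, appropriateness forces $\varphi_{\bar m}$ to lie in \textsc{Seq}, while the self-reference forces $\varphi_{k'(\bar m)}(s(i,n))$ to be a valid convergence bound for $(F(\varphi_{\bar m}(c)))_c$ on $B'_{s(i,n)}$---which is impossible, since past the transition the values of this image lie outside $B'_{s(i,n)}$.
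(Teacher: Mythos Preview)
Your final paragraph has the right idea and is essentially the paper's argument, but the way you have structured the proof creates a mismatch between the function $h$ you \emph{define} and the function your fixed-point argument actually \emph{verifies}. You set $h(i,n)=\varphi_{q(i)}(N)$ with $N=\varphi_{k'(p(q(i)))}(s(i,n))$, computed from the fixed index $p(q(i))$. Your proposed contradiction, however, goes through a self-referential index $\bar m$ whose transition point is $e=\varphi_{k'(\bar m)}(s(i,n))$. There is no reason for $e$ to equal $N$: $k'$ takes an \emph{index} as input, and $\bar m\neq p(q(i))$ in general. Thus the recursion argument, if it succeeds, establishes $F(B_{\varphi_{q(i)}(e)})\subseteq B'_n$, not $F(B_{\varphi_{q(i)}(N)})\subseteq B'_n$; it verifies the wrong $h$. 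Note also that appropriateness (Definition~\ref{df-compat}) only guarantees that the swap at position $e$ to the value $x_{r(i,\varphi_{q(i)}(e),n)}$ lies in \textsc{Seq}; swapping at $e$ to your fixed $w=x_{r(i,\varphi_{q(i)}(N),n)}$ is not covered, and for $r(i,\varphi_{q(i)}(e),n)$ to be defined you would need $F(B_{\varphi_{q(i)}(e)})\not\subseteq B'_n$, which is not your hypothesis.

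The paper avoids this by building the recursion into the definition of $h$ from the outset: it takes $g(i,n)$ to be the fixed point (your $\bar m$) and sets $h(i,n)=\varphi_{k'(g(i,n))}(s(i,n))$. One first shows $h(i,n)\conv$ (if not, $\varphi_{g(i,n)}$ coincides with $\varphi_{p(q(i))}$, a \textsc{Seq}-sequence converging to $x_i$, so the module must halt at $s(i,n)$). Then one assumes $F(B_{\varphi_{q(i)}(h(i,n))})\not\subseteq B'_n$; now appropriateness applies at the \emph{same} transition point $h(i,n)$, the swap sequence is in \textsc{Seq}, and the self-reference delivers the contradiction exactly as you sketched. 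So your first two paragraphs should be discarded in favour of the recursion-based definition, after which your final paragraph is the paper's proof.
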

\begin{proof}
Let $k' \in R^{(1)}$ witness the effective sequential continuity of $F$ and  $s \in P^{(2)}$ as well as $r \in P^{(3)}$ its having an appropriate witness for noninclusion. Moreover, let $q \in R^{(1)}$ be as in Lemma~\ref{lm-normseq} and $p \in R^{(1)}$ as in Property~\ref{prop-seq-2} of \textsc{Seq}. By the recursion theorem there is a function $g \in R^{(2)}$ with
\[
\varphi_{g(i,n)}(a) = \begin{cases}
                                   \varphi_{p(q(i))}(a) & \hspace{-3em}\text{ if $\varphi_{k'(g(i,n))}(s(i,n))\nconv_a$, or $\varphi_{k'(g(i,n))}(s(i,n))\conv_a$}\\
                                                                 & \text{ and $a < \varphi_{k'(g(i,n))}(s(i,n))$,} \\
                                   r(i, \varphi_{q(i)}(\varphi_{k'(g(i,n))}(s(i,n))),n) & \text{ otherwise.}
                               \end{cases}
\]
Set $h(i,n) = \varphi_{k'(g(i,n))}(s(i,n))$ and assume that $h(i,n)\nconv$, for some $i \in \dom(x)$ and $n \in \omega$ with $F(x_i) \in B'_n$. Then $s(i,n)\conv$ and $F(x_i) \in B'_{s(i,n)} \subseteq B'_n$. Moreover, $x_{\varphi_{g(i,n)}(a)} = x_{\varphi_{p(q(i))}(a)}$, for all $a \in \omega$. Thus, $(x_{\varphi_{g(i,n)}(a)})_a$ is a sequence in \textsc{Seq} converging to $x_i$. It follows that $\varphi_{k'(g(i,n))}(s(i,n))$ is defined, a contradiction. Therefore, $h(i,n)$ is defined, for all $i \in \dom(x)$ and $n \in \omega$ with $F(x_i) \in B'_n$. 

Next, assume that $B_{\varphi_{q(i)}(h(i,n))} \not\subseteq F^{-1}(B'_n)$, for some $i \in \dom(x)$ and $n \in \omega$ with $F(x_i) \in B'_n$. Then $r(i,\varphi_{q(i)}(h(i,n)),n)\conv \in \dom(x)$ with 
\[
x_{r(i,\varphi_{q(i)}(h(i,n)),n)} \in \hl(B_{\varphi_{q(i)}(h(i,n))}) \setminus F^{-1}(B'_{s(i,n)}).
\]
As the noninclusion witness is appropriate, $(x_{\varphi_{g(i,n)}(a)})_a$ is in \textsc{Seq}. Moreover, it converges to $x_{r(i,\varphi_{q(i)}(h(i,n)),n)}$.

Since $\lambda b.\, \varphi_{k'(g(i,n))}(b)$ is a module of the convergence of $(F(x_{\varphi_{g(i,n)}(a)}))_a$ to 
\[
F(x_{r(i,\varphi_{q(i)}(h(i,n)),n)}),
\]
it follows that for all $c \ge \varphi_{k'(g(i,n))}(s(i,n))$, $F(x_{\varphi_{g(i,n)}(c)}) \in B'_{s(i,n)}$. In particular, we have that 
$F(x_{\varphi_{g(i,n)}(\varphi_{k'(g(i,n))}(s(i,n)))}) \in B'_{s(i,n)}$. As 
\[
\varphi_{g(i,n)}(\varphi_{k'(g(i,n))}(s(i,n))) = r(i,\varphi_{q(i)}(\varphi_{k'(g(i,n))}(s(i,n))),n) = r(i,\varphi_{q(i)}(h(i,n)),n),
\]
it follows that $F(x_{r(i,\varphi_{q(i)}(h(i,n)),n)}) \in B'_{s(i,n)}$, a contradiction. Consequently, the function $\lambda i, n.\, \varphi_{q(i)}(h(i,n))$ witnesses that $F$ is effectively pointwise continuous. 
\end{proof}

Classically, continuous maps are also sequentially continuous. Let us see next in as much this holds in the effective setting. 
 
\begin{proposition}\label{prop-contseq}
Let \textsc{Seq} have maximal limits and $x$ a limit algorithm that computes maximal limits.  Then every effectively pointwise continuous maps $\fun{F}{T}{T'}$ is effectively sequentially continuous.
\end{proposition}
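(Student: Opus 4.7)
The plan is to compute the $\le_\tau$-maximal limit $y = x_{\li(m)}$ of $(y_a)_a$ via the limit algorithm, exploit effective pointwise continuity of $F$ at this indexed point, and compose with the supplied convergence module $k$ of $(y_a)_a$ to build a module for $(F(y_a))_a$.

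Given an index $m$ of a computably converging sequence $(y_a)_a = (x_{\varphi_m(a)})_a$ in \textsc{Seq} with computable uniform convergence module $k \in P^{(1)}$, I would set $y := x_{\li(m)}$, which by the hypothesis that $\li$ computes maximal limits equals $\max_{\le_\tau} \Lim_a y_a$. With $h \in P^{(2)}$ witnessing effective pointwise continuity of $F$, define $k'(n) := k(h(\li(m),n))$; this is partial computable uniformly in $m$ and in any G\"odel number of $k$. The key observation is that if $F(y) \in B'_n$, then $h(\li(m),n)\conv$, $y \in B_{h(\li(m),n)}$, and $F(B_{h(\li(m),n)}) \subseteq B'_n$; since $y \in \Lim_a y_a$, the module $k$ forces $k'(n)\conv$ with $y_c \in B_{h(\li(m),n)}$, hence $F(y_c) \in B'_n$, for all $c \ge k'(n)$. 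In particular $F(y) \in \Lim_a F(y_a)$.

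For the containment $\Lim_a F(y_a) \supseteq F(\Lim_a y_a)$ and for validating $k'$ on every $n$ with $B'_n \cap \Lim_a F(y_a) \ne \emptyset$, I would invoke monotonicity of $F$ under specialization. The preimage $F^{-1}(B'_n)$ is completely enumerable via $\set{i}{h(i,n)\conv}$ and hence upward closed under $\le_\tau$ by Corollary~\ref{cor-cecl}. Thus for $z \in \Lim_a y_a$, since $z \le_\tau y$, $F(z) \in B'_n$ yields $F(y) \in B'_n$, i.e., $F(z) \le_{\tau'} F(y)$. Combined with $F(y) \in \Lim_a F(y_a)$ and the downward closure of limit sets under $\le_{\tau'}$, this gives $F(z) \in \Lim_a F(y_a)$; the same monotonicity identifies $F(y)$ as the $\le_{\tau'}$-maximum of $\Lim_a F(y_a)$, so $B'_n \cap \Lim_a F(y_a) \ne \emptyset$ forces $F(y) \in B'_n$, completing the verification of $k'$.

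The main obstacle will be that Corollary~\ref{cor-cecl} is established through Proposition~\ref{prop-liprop1} using a two-point sequence with both endpoints indexed, so for non-indexed $z \in \Lim_a y_a$ the upward-closure step is not immediate; it can be recovered either in settings where all elements of $T$ are indexed (as in the principal applications) or by applying Proposition~\ref{prop-liprop1} directly to $(y_a)_a$ and the c.e.\ set $F^{-1}(B'_n)$, which presupposes that sequences in \textsc{Seq} allow delaying.
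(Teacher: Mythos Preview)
Your overall architecture matches the paper's exactly: compute the maximal limit $x_{\li(m)}$, apply the pointwise-continuity witness $h$ there, and precompose with the given module $k$ to obtain $k'(n) = k(h(\li(m),n))$. The paper does precisely this.

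The divergence is in how you justify monotonicity of $F$ under the specialization order. The paper simply remarks that effectively pointwise continuous maps are monotone, and this is immediate without any appeal to Corollary~\ref{cor-cecl}: if $u \le_\tau z$ and $F(u) \in B'_n$, pick any index $i$ of $u$; then $u \in B_{h(i,n)}$, open sets are upward closed under $\le_\tau$, so $z \in B_{h(i,n)}$, whence $F(z) \in F(B_{h(i,n)}) \subseteq B'_n$. No limit algorithm and no complete enumerability are needed for this step.

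Your detour through Corollary~\ref{cor-cecl} has a genuine gap: the set $\set{i}{h(i,n)\conv}$ need \emph{not} witness that $F^{-1}(B'_n)$ is completely enumerable. Definition~\ref{df-effcon}(\ref{df-effcon-1}) constrains $h(i,n)$ only when $F(x_i) \in B'_n$; nothing prevents $h(i,n)$ from converging spuriously when $F(x_i) \notin B'_n$, so the biconditional required for complete enumerability can fail. (Note also that neither $F$ effective nor $x'$ computable is assumed here, so the usual route to complete enumerability of $F^{-1}(B'_n)$ is unavailable.)

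The ``main obstacle'' you flag at the end is a red herring. The numbering $x$ of $T$ is by definition onto, so every $z \in \Lim_a y_a$ \emph{is} indexed, and the two-point monotone sequence used to derive Corollary~\ref{cor-cecl} applies without restriction; it also trivially allows delaying. You need neither a blanket delaying hypothesis nor a restriction to indexed limits. The actual issue lies earlier, in the unjustified complete-enumerability claim, and it evaporates once you argue monotonicity directly as the paper does.
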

\begin{proof}
Let $h \in P^{(2)}$ witness that $F$ is effectively pointwise continuous, and assume that $m$ is an index of a computable sequence in \textsc{Seq} with computable convergence module $k \in P^{(2)}$ converging to a point $y \in T$. Moreover, let $n \in \omega$ so that $F(y) \in B'_n$. Then $(x_{\varphi_m(a)})_a$ also converges computably to its largest limit point $x_{\li(m)}$. As is easily verified, effectively pointwise continuous maps are monotone with respect to the specialization order. Therefore, $F(x_{\li(m)}) \in B'_n$. It follows that $h(\li(m),n)\conv$, $x_{\li(m)} \in B_{h(\li(m),n)}$, and $F(B_{h(\li(m),n)}) \subseteq B'_n$. Hence, $x_{\varphi_m(a)} \in B_{h(\li(m),n)}$ and consequently $F(x_{\varphi_m(a)}) \in B'_n$, for all $a \ge k(h(\li(m),n))$, which shows that $F$ is effectively sequentially continuous. 
\end{proof}

\section{Effective maps}\label{map}

In this section we will investigate when an effective map $\fun{F}{T}{T'}$ is effectively sequentially continuous.

Assume  that $(y_a)_a$ is a computable sequence in \textsc{Seq} converging to $y \in T$. Then $(F(y_a))_a$ is computable as well. We will now show that in certain general cases it converges computably to $F(y)$, i.e., we will show that in these cases each effective map is effectively sequentially continuous.

Let $m$ be an index of $(y_a)_a$ and $n \in \omega$ with $F(y) \in B'_n$. Then, uniformly in $m$ and $n$, we recursively construct a  computable sequence $(z_a)_a$ with index $b$:
\begin{enumerate}

\item We follow the sequence $(y_a)_a$ as long as the computation of $\li(b)$ has \emph{not} terminated or $F(x_{\li(b)})$ has \emph{not} been found in $B'_n$.

\item If the computation of $\li(b)$ has terminated and $F(x_{\li(b)})$ has been found in $B'_n$, say in step $N_0$,  we delay our strategy to follow the sequence $(y_a)_a$ and repeat the element $y_{N_0}$ as long as $F(y_{N_0})$ has \emph{not} been found in $B'_n$.

\item If, in step $N_1$, we have found $F(y_{N_0})$ in $B'_n$, we go to element $y_{N_0 + 1}$ and repeat it as long as we have \emph{not} found $F(y_{N_0+1})$ in $B'_n$.

\item If, in step $N_2$, we have found $F(y_{N_0+1})$ in $B'_n$, we go to element $y_{N_0 + 2}$ and repeat it as long as we have \emph{not} found $F(y_{N_0+2})$ in $B'_n$, and so on.

\end{enumerate}

As we will see, all steps $N_0, N_1, N_2. \ldots$ exist and depend computably on $m, n$. Obviously, then $N_0$ is a module for the convergence of $(F(y_a))_a$ to $F(y)$.

\begin{theorem}\label{thm-seqcont}
Let $x$ be computable and  have a limit algorithm, $x'$ be computable, and the sequences in \textsc{Seq} allow delaying. Then every effective map $\fun{F}{T}{T'}$ is strongly effectively sequentially continuous.
\end{theorem}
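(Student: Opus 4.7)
Following the sketch preceding the statement, the plan is a diagonal construction via the recursion theorem. Let $f \in P^{(1)}$ track $F$ and let $L' \subseteq \omega$ be the computably enumerable set witnessing computability of $x'$, so that $\pair{i',n'} \in L'$ iff $x'_{i'} \in B'_{n'}$; let $\li \in P^{(1)}$ witness that $x$ has a limit algorithm. By Corollary~\ref{cor-effmon}, $F$ is monotone for the specialization orders, and for every $\tilde{y} \in \Lim_a y_a$ Corollary~\ref{cor-liprop}(\ref{cor-liprop-2}) together with monotonicity of $F$ yield $F(\tilde{y}) \le_{\tau'} F(x_{\li(m)})$; so whenever $B'_n$ meets $\Lim_a F(y_a)$ we automatically have $F(x_{\li(m)}) \in B'_n$ by upward-closedness of $B'_n$.

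Given $m, n$, I apply the recursion theorem to fix an index $b = b(m,n)$ of a computable sequence $(z_a)_a = (x_{\varphi_b(a)})_a$ whose stagewise computation dovetails the enumeration of $L'$ with that of $\li(b)$. In the initial phase I set $\varphi_b(a) := \varphi_m(a)$ until, at some step $N_0$, both $\li(b)\conv$ and $\pair{f(\li(b)),n} \in L'$ have been observed. Thereafter I enter successive phases: in phase $k \ge 1$, starting at step $N_{k-1}$, output $\varphi_b(a) := \varphi_m(N_0 + k - 1)$ repeatedly while searching for $\pair{f(\varphi_m(N_0 + k - 1)), n}$ in $L'$, moving on to phase $k+1$ at step $N_k$ once it is found.

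The key step is verifying that all $N_k$ exist. If $N_0$ never fires, $(z_a)_a$ equals $(y_a)_a$, a convergent sequence in \textsc{Seq}; by the limit algorithm $\li(b)\conv$ with $x_{\li(b)} \in \Lim_a y_a$, and the opening remark forces $F(x_{\li(b)}) \in B'_n$, which the dovetailing must eventually detect, a contradiction. Inductively, assume $N_k$ fires but $N_{k+1}$ does not. Then $(z_a)_a$ is $(y_a)_a$ with finite delays at $N_0, \dotsc, N_{k-1}$ followed by an infinite stick at $N_k$; this sequence lies in \textsc{Seq} by iterating ``allows delaying'' along the intermediate delayed sequences (which is inherited because, by hypothesis, every sequence in \textsc{Seq} allows delaying) and then invoking Property~(\ref{prop-seq-3}) of \textsc{Seq} for the final stick. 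It is eventually constant $y_{N_0+k}$, so Condition~\ref{df-la}(\ref{df-la-2}) gives $x_{\li(b)} = y_{N_0+k}$; combined with $\pair{f(\li(b)),n} \in L'$ from the trigger of $N_0$, this forces $\pair{f(\varphi_m(N_0+k)),n} \in L'$, which would be enumerated and would contradict the failure of $N_{k+1}$.

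Hence all $N_k$ exist and are computed uniformly from $m, n$, and each phase verified $F(y_{N_0+k}) \in B'_n$, so $F(y_a) \in B'_n$ for every $a \ge N_0$. Choosing $k' \in R^{(1)}$ with $\varphi_{k'(m)}(n) = N_0(m,n)$ supplies the required uniform convergence module, and applying this module at every $\tilde{y} \in \Lim_a y_a$ and every $n$ with $F(\tilde{y}) \in B'_n$ gives $\Lim_a F(y_a) \supseteq F(\Lim_a y_a)$, as demanded by Definition~\ref{df-effseqcon}. The principal technical point is the closure of \textsc{Seq} under the iterated finite delays terminated by the infinite stick that occur in the inductive step; the rest is routine dovetailing combined with careful book-keeping against the limit algorithm.
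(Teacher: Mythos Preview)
Your proof is correct and follows essentially the same approach as the paper's: a recursion-theorem diagonal that follows the given sequence until $\li(b)$ and membership of $F(x_{\li(b)})$ in $B'_n$ are observed, then successively delays at each element until that element is verified to map into $B'_n$, with the two contradiction arguments (for $N_0$ and for each $N_{k+1}$) matching the paper's. Your explicit invocation of the iterated ``allows delaying'' closure together with Property~(\ref{prop-seq-3}) for the terminal infinite stick is in fact more careful than the paper's own phrasing; the only minor slip is in the opening remark, where ``$B'_n$ meets $\Lim_a F(y_a)$'' should read ``$B'_n$ meets $F(\Lim_a y_a)$'', but this does not affect the argument.
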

\begin{proof}
Since $x'$ is computable, $B'_n$ is completely enumerable, uniformly in $n$. Hence, as $F$ is effective, $F^{-1}(B'_n)$ is completely enumerable as well. Let this be witnessed by $W_{v(n)}$ with $v \in R^{(1)}$. Obviously, $v$ uniformly depends on the index of $F$. Finally, let $\li \in P^{(1)}$ witness that $x$ has a limit algorithm and let $t \in R^{(1)}$ with $\range(t) \subseteq \dom(x)$ so that $(x_{t(a)})_a$ is a sequence in \textsc{Seq} converging to some $y \in T$.

Set $\bar{g}(\bar{b},n) = \mu c: \varphi_{v(n)}(\li(\bar{b}))\conv_c$ and define $\bar{u} \in P^{(3)}$ by
\begin{align*}
&\bar{u}(\bar{b},n,0) = 0,\\
&\bar{u}(\bar{b}, n, a+1) = \begin{cases} 
                                                 \bar{u}(\bar{b},n,a)+1 & \text { if $\varphi_{v(n)}(\li(\bar{b}))\nconv_{a+1}$, or  $\varphi_{v(n)}(\li(\bar{b}))\conv_{a+1}$}\\
                                                                                      &\text{ and $\varphi_{v(n)}(t(\bar{u}(\bar{b},n,a)))\conv_{a+1}$,}\\
                                                 \bar{u}(\bar{b},n,a) & \text{ if $\varphi_{v(n)}(\li(\bar{b}))\conv_{a+1}$ and  $\varphi_{v(n)}(t(\bar{u}(\bar{b},n,a)))\nconv_{a+1}$.}
                                              \end{cases}
\end{align*}
In addition, let $h \in R^{(2)}$ with
\[
\varphi_{h(\bar{b},n)}(a) = t(\bar{u}(\bar{b},n,a)).
\]

By the recursion theorem there is then a function $b\in R^{(1)}$ with $\varphi_{b(n)} = \varphi_{h(b(n),n)}$. Set $g(n) = \bar{g}(b(n),n)$ and $u(n,a) = \bar{u}(b(n),n,a)$.

Suppose that $g(n)\nconv$, for some $n \in \omega$ with $F(y) \in B'_n$. Then $u(n,a) = a$ and hence $x_{\varphi_{b(n)}(a)} = x_{t(a)}$. It follows that $(x_{\varphi_{b(n)}(a)})_a$ converges to $y$. Hence, $y \le_\tau x_{\li(b(n))}$ because of Corollary~\ref{cor-liprop}. By assumption, $y \in F^{-1}(B'_n)$. With Corollary~\ref{cor-cecl} we therefore obtain that $x_{\li(b(n))} \in F^{-1}(B'_n)$ as well, since $F^{-1}(B'_n)$ is completely enumerable. Consequently, $\li(b(n)) \in W_{v(n)}$, i.e., $g(n)$ is defined, a contradiction.

Next, let 
\begin{align*}
&k(n,0) = \mu c: \varphi_{v(n)}(\li(b(n)))\conv_c \wedge c > g(n) \wedge \varphi_{v(n)}(t(g(n)+0))\conv_c, \\
&k(n, e+1) = \mu c: k(n,e)\conv_c \wedge c > k(n,e) \wedge \varphi_{v(n)}(t(g(n)+e+1))\conv_c,
\end{align*}
and assume that there is some $ n, \bar{a} \in \omega$ so that $k(n, \bar{a})\nconv$. Let $\bar{a}$ be minimal with this property. Then 
\[
\varphi_{b(n)}(a) = \begin{cases}
                                      t(a) & \text{ if $a < g(n)$,}\\
                                      t(g(n)) & \text{ if $g(n) \le a < k(n,0))$,}\\
                                      t(g(n)+1)) & \text{ if $k(n,0) \le a < k(n,1)$,}\\
                                      \hfill       \vdots  \hfill\\
                                      t(g(n)+\bar{a}-1) & \text{ if $k(n,\bar{a}-2) \le a < k(n,\bar{a}-1)$,}\\
                                      t(g(n)+\bar{a}) & \text{ if $ a \ge  k(n, \bar{a}-1)$.}
                                 \end{cases}
\]
                   
As \textsc{Seq} is closed under delaying, the sequence $(x_{\varphi_{b(n)}(a)})_a$ is in \textsc{Seq}. Moreover, it is a sequence that is eventually constant. With Condition~\ref{df-la}(\ref{df-la-2}) it therefore follows that $x_{\li(b(n))} = x_{t(g(n)+\bar{a})}$. As we have already seen, $\li(b(n)) \in W_{v(n)}$, i.e., $x_{\li(b(n))} \in F^{-1}(B'_n)$. The latter set is completely enumerable. Therefore, also $t(g(n)+\bar{a}) \in W_{v(n)}$, which means that $k(n,\bar{a})\conv$, a contradiction.

This shows that $k(n,e)\conv$, for all $n, e \in \omega$. In other words, $g$ is a computable convergence module, i.e., the sequence $(F(x_{t(a)})_a$ converges computably to $F(y)$. As follows from the construction, $g$ depends computably on the G\"odel number of $t$. So, we have that $F$ is sequentially continuous, strongly effectively.
\end{proof}

Note that the construction of the convergence module $g$ not only depends computably on the index of the sequence transformed by $F$, but also on the index of $F$. Moreover, we do not know whether effective maps have a witness for noninclusion under the assumptions of the theorem.

\begin{corollary}
Let  $x$ have a limit algorithm and be computable. Moreover, let the sequences in \textsc{Seq} allow delaying. Then every convergent sequence in \textsc{Seq} converges computably and $\mathcal{T}$ has a uniformly computable convergence module.
\end{corollary}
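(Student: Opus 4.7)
The plan is to deduce the corollary directly from Theorem~\ref{thm-seqcont} by specialising it to the identity map on $T$. Taking both source and target space to be $\mathcal{T}$ with numbering $x$, the hypotheses of the theorem are literally those of the corollary: $x$ is computable, $x$ has a limit algorithm, $x' = x$ is computable, and sequences in \textsc{Seq} allow delaying. No other structure on the target (such as its own copy of \textsc{Seq}) is needed, since the definition of strong effective sequential continuity only refers to \textsc{Seq} on the source side.

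Next, I would observe that $\id_T \colon T \to T$ is effective in the sense of Definition~\ref{def-fct-eff}: the identity function $\lambda i.\,i$ on $\omega$ tracks it, because $\id_T(x_i) = x_i = x_i$ for all $i \in \dom(x)$. Theorem~\ref{thm-seqcont} therefore applies and yields that $\id_T$ is strongly effectively sequentially continuous.

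Unfolding Definition~\ref{df-effseqcon}(1) with $F = \id_T$ produces a function $k' \in R^{(1)}$ such that, for every index $m$ of an arbitrary convergent sequence $(y_a)_a$ in \textsc{Seq}, the function $\lambda n.\,\varphi_{k'(m)}(n)$ is a uniform convergence module of $(\id_T(y_a))_a = (y_a)_a$. This gives both conclusions at once: every convergent sequence in \textsc{Seq} converges computably (with convergence module $\varphi_{k'(m)}$ available from any index $m$), and setting $\cm = k'$ exhibits the required uniformly computable convergence module of $\mathcal{T}$.

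There is no real obstacle here; the only point one must verify is that $\mathcal{T}$ may legitimately serve as its own target in Theorem~\ref{thm-seqcont}, which is immediate from inspection of that theorem's hypotheses.
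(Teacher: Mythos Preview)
Your proposal is correct and follows exactly the paper's own approach: the paper's proof consists of the single sentence ``Let $F$ be the identity on $T$ in the above theorem,'' which is precisely what you do, with the details spelled out.
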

\begin{proof}
Let $F$ be the identity on $T$ in the above theorem.
\end{proof}

The  most restrictive assumption in the above result is that sequences in \textsc{Seq} should allow delaying. In the  proof the constructed sequence had to be delayed several times. This is certainly not possible for sequences that have to satisfy strong conditions as the regular Cauchy sequences. In what follows we will derive an analogous result for spaces like constructive metric spaces in which the requirement that sequences in \textsc{Seq} allow delaying is no longer used. As we will see, the construction in the proof is very much the same as the one in the previous proof, only where we had to wait and see whether a certain computation will terminate, we can now use a decision precedure. We will derive the result for the rather general class of effective $T_3$ spaces. For a subset $X$ of a topological space, let $\cl(X)$ denote its closure.

\begin{definition}\label{df-t3}
$\mathcal{T}$ is \emph{effectively} $T_3$, if there is some function $s \in P^{(2)}$ such that  $s(i,m)\conv$ with 
\[
x_i \in B_{s(i,m)} \subseteq \cl(B_{s(i,m)}) \subseteq B_m,
\]
for all $i \in \dom(x)$ and $m \in \omega$ with $x_i \in B_m$.
\end{definition}

As is shown in \cite[Lemma 3.3]{sp98}, every constructive metric space is effectively $T_3$.

\begin{theorem}\label{thm-seqcontt3}
Let $\mathcal{T'}$ be effectively $T_3$,  $x'$ be computable as well as co-computable, and $x$ have a limit algorithm. Then every effective map $\fun{F}{T}{T'}$ is strongly effectively sequentially continuous.
\end{theorem}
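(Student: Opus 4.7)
The plan is to follow the blueprint of Theorem~\ref{thm-seqcont} with one key substitution: where that proof used the wait-and-see delaying mechanism, the present argument uses a $T_3$-based decision procedure. Given an effective map $F$ tracked by $f \in P^{(1)}$, an index $m$ of a convergent sequence $(y_a)_a = (x_{t(a)})_a \in \textsc{Seq}$ with limit $y$, and $n \in \omega$ with $F(y) \in B'_n$, the aim is to produce a convergence module $\varphi_{k'(m)}(n)$ uniformly in $m$, $n$ and the G\"odel number of $F$. The first step is to shrink the target neighbourhood twice by effective $T_3$: applying the witness $s$ of Definition~\ref{df-t3} to the index $f(\li(m))$ of $F(y)$ (invoking $y = x_{\li(m)}$, which is automatic in the Hausdorff applications) yields indices $n', n''$ with
\[
F(y) \in B'_{n''} \subseteq \cl(B'_{n''}) \subseteq B'_{n'} \subseteq \cl(B'_{n'}) \subseteq B'_n.
\]
Since $\cl(B'_{n''}) \subseteq B'_{n'}$, the two open sets $B'_{n'}$ and $\ext(B'_{n''})$ cover $T'$. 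By computability resp.\ co-computability of $x'$ both are completely enumerable uniformly in their indices, and effectivity of $F$ transports complete enumerability to the preimages $F^{-1}(B'_{n'})$ and $F^{-1}(\ext(B'_{n''}))$; let $\alpha(n')$ and $\beta(n'')$ be corresponding CE-indices, with $W_{\alpha(n')} \cup W_{\beta(n'')} \supseteq \dom(x)$.

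By the recursion theorem I then obtain $b \in R^{(1)}$ with $\varphi_{b(n)}(a) = t(\bar{u}(b(n),n,a))$, where $\bar{u}(\bar{b},n,0) = 0$ and
\[
\bar{u}(\bar{b},n,a+1) = \begin{cases} \bar{u}(\bar{b},n,a) & \text{if } \varphi_{\alpha(n'')}(\li(\bar{b}))\conv_{a+1} \text{ and } \varphi_{\beta(n'')}(t(\bar{u}(\bar{b},n,a)))\conv_{a+1}, \\ \bar{u}(\bar{b},n,a)+1 & \text{otherwise.} \end{cases}
\]
Thus $\varphi_{b(n)}$ follows $t$ and freezes as soon as the limit is positively witnessed in $F^{-1}(B'_{n''})$ \emph{and} the current point is simultaneously negatively witnessed in $F^{-1}(\ext(B'_{n''}))$. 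By Property~\ref{prop-seq-3} of \textsc{Seq}, the resulting sequence --- either $(y_a)_a$ itself, or a prefix followed by a constant tail --- lies in \textsc{Seq}, so $\li(b(n))$ is defined. The decisive observation is that freezing is self-defeating: if the construction froze at $\bar{u}$-value $\bar{a}$, then Condition~\ref{df-la}(\ref{df-la-2}) would give $x_{\li(b(n))} = y_{\bar{a}}$, and the freeze trigger would simultaneously force $F(y_{\bar{a}}) \in B'_{n''}$ and $F(y_{\bar{a}}) \in \ext(B'_{n''})$, contradicting $B'_{n''} \cap \ext(B'_{n''}) = \emptyset$. Hence $\varphi_{b(n)} = t$ throughout, $x_{\li(b(n))} = y$, and $\li(b(n)) \in W_{\alpha(n'')}$; let $\gamma$ be the step of convergence of $\varphi_{\alpha(n'')}(\li(b(n)))$. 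Setting $\varphi_{k'(m)}(n) := \gamma$ gives the candidate module, computable uniformly in $m$ and the index of $F$.

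The main obstacle to completing this into a full proof is verifying that $\gamma$ really is a convergence module. The non-freezing of the construction yields, for each $a \geq \gamma - 1$, only that $\varphi_{\beta(n'')}(t(a))$ does not halt in $a+1$ steps --- which leaves open the possibility that $t(a) \in W_{\beta(n'')}$ with some convergence time $> a+1$, in which case $F(y_a) \in \ext(B'_{n''})$ need not lie in $B'_n$. To close this gap one should bring in the positive witness $W_{\alpha(n')}$ as well: the cover $W_{\alpha(n')} \cup W_{\beta(n'')} \supseteq \dom(x)$ guarantees that every $t(a)$ is eventually classified in finite time, and a refined recursion-theoretic fixpoint --- dovetailing the $\alpha(n')$-checks at all past indices into the freeze condition --- should allow one to pick $\gamma$ large enough that every $t(a)$ with $a \geq \gamma$ is positively classified in $W_{\alpha(n')}$ (whence $F(y_a) \in B'_{n'} \subseteq B'_n$). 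It is precisely in arranging this refinement that the combined effective-$T_3$ and co-computability hypotheses do their real work.
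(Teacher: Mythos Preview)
Your high-level plan is right: the effective $T_3$ shrinking together with co-computability of $x'$ gives a pair of completely enumerable open sets whose $F$-preimages cover $T$, and this covering is what replaces the delaying mechanism of Theorem~\ref{thm-seqcont}. But the specific recursion you wrote down does not exploit the covering, and the gap you flag in your final paragraph is real and fatal for the construction as stated. Not freezing at stage $a$ only tells you that one of the two time-bounded computations failed to halt within $a+1$ steps; it says nothing about which side of the cover $F(y_a)$ lies on. Your proposed refinement (``dovetailing the $\alpha(n')$-checks at all past indices'') is a hope rather than an argument, and it is not clear how to make it work while keeping the sequence in \textsc{Seq} via Property~\ref{prop-seq-3} alone.

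The paper closes this gap with a different mechanism: at each stage after $g(n)$ it runs an \emph{unbounded race} between the positive check $\varphi_{v(n)}(t(u))$ (does $F(y_u) \in B'_n$?) and the negative check $\varphi_{w(\li(b),n)}(t(u))$ (does $F(y_u) \in \ext(B'_{s(f(\li(b)),n)})$?). Because the two target sets cover $T'$, at least one computation terminates, so the race is a genuine decision procedure and $\bar u$ is total. If the positive side wins, increment $u$; if the negative side wins, freeze at the current value forever. Freezing is then ruled out exactly as in your argument (the eventually constant sequence forces $x_{\li(b(n))} = y_{\bar a}$, contradicting $F(y_{\bar a}) \in \ext(B'_{s(\ldots)})$ and $F(x_{\li(b(n))}) \in B'_{s(\ldots)}$). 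The payoff is that non-freezing now carries real content: for every $a \ge g(n)$ the positive side won, hence $t(a) \in W_{v(n)}$ and $F(y_a) \in B'_n$. Thus $g(n)$ itself is the desired module. Note also that a single $T_3$ shrinking suffices (your double application to get $n',n''$ is unnecessary): from $\cl(B'_{s(f(\li(b)),n)}) \subseteq B'_n$ one already has $B'_n \cup \ext(B'_{s(f(\li(b)),n)}) = T'$.
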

\begin{proof}
Let $s \in P^{(2)}$ and  $f \in P^{(1)}$, respectively, witness that $\mathcal{T}$ is effectively $T_3$ and $F$ is effective. Since $x'$ is computable, it follows as in the proof of Theorem~\ref{thm-seqcont} that $F^{-1}(B'_n$) is completely enumerable, uniformly in $n$. Let this be witnessed by $W_{v(n)}$ with $v \in R^{(1)}$.

As $x'$ is also co-computable, it follows in a similar way that there is a function $w \in R^{(2)}$ so that $W_{w(i,n)}$ witnesses that $F^{-1}(\ext(B'_{s(f(i),n)}))$ is completely enumerable, uniformly in $i,n$.  Again $v, w$ uniformly depend on the G\"odel number of $f$.  Let, finally, $\li \in P^{(1)}$ witness that $x$ has limit algorithm and let $t \in R^{(1)}$ with $\range(t) \subseteq \dom(x)$ so that $(x_{t(a)})_a$ is a sequence in \textsc{Seq} converging to some point $y \in T$. 

In the construction of the sequence with index $b$ we are going to describe, for certain sequence elements $x_{t(a)}$ we will search whether we find $F(x_{t(a)}) \in B'_n$ or $F(x_{t(a)}) \in \ext(B'_{s(f(\li(b)),n)})$. Possibly both is the case, then we give preference to what we find first. 

The definitions of the functions that we introduce next are parts of a large mutual recursion.
Let 
\[
\bar{k}_1(\bar{b},n,a) = \mu c: \varphi_{v(n)}(t(\bar{u}(\bar{b},n,a)))\conv_c, \quad
\bar{k}_2(\bar{b},n,a) = \mu c: \varphi_{w(\li(\bar{b}),n)}(t(\bar{u}(\bar{b},n,a)))\conv_c,
\]
and
\[
\bar{k}(\bar{b},n,a) = \begin{cases}
                                               \bar{k}_1(\bar{b},n,a) & \text{ if $\varphi_{v(n)}(t(\bar{u}(\bar{b},n,a)))$ terminates in at most the same}\\
                                                                                      & \text{ number of steps as $\varphi_{w(\li(\bar{b}),n)}(t(\bar{u}(\bar{b},n,a)))$,}\\
                                               \bar{k}_2(\bar{b},n,a) & \text{ if $\varphi_{w(\li(\bar{b}),n)}(t(\bar{u}(\bar{b},n,a)))$ terminates in less steps than}\\ 											& \text{ $\varphi_{v(n)}(t(\bar{u}(\bar{b},n,a)))$,}\\
                                              \text{undefined} & \text{ otherwise}.
                                        \end{cases} 
\]
Moreover, set $\bar{g}(\bar{b},n) = \mu c: \varphi_{v(n)}(\li(\bar{b}))\conv_c$ and define $\bar{u} \in P^{(3)}$ by
\begin{align*}
&\bar{u}(\bar{b},n,0) = 0,\\
&\bar{u}(\bar{b},n,a+1) = \begin{cases}
                                              \bar{u}(\bar{b},n,a)+1 & \text{ if $\varphi_{v(n)}(\li(\bar{b}))\nconv_{a+1}$, or $\varphi_{v(n)}(\li(\bar{b}))\conv_{a+1}$, $\bar{k}(\bar{b},n,a)\conv$,}\\
                                                                                     & \text{ $\bar{k}_1(\bar{b},n,a)\conv$ and $\bar{k}(\bar{b},n,a) = \bar{k}_1(\bar{b},n,a)$,}\\
                                             \bar{u}(\bar{b},n,a) & \text{ if  $\varphi_{v(n)}(\li(\bar{b}))\conv_{a+1}$,  $\bar{k}(\bar{b},n,a)\conv$, $\bar{k}_2(\bar{b},n,a)\conv$}\\
                                             & \text{ and $\bar{k}(\bar{b},n,a) = \bar{k}_2(\bar{b},n,a)$,}\\
                                             \text{undefined} & \text{ otherwise.}
                                          \end{cases}
\end{align*}
Finally, let $h \in R^{(2)}$ with
\[
\varphi_{h(\bar{b},n)}(a) = t(\bar{u}(\bar{b},n,a)).
\]

By the recursion theorem there is then some function $b\in R^{(1)}$ with $\varphi_{b(n)} = \varphi_{h(b(n),n)}$. Set
\begin{gather*}
k_1(n,a) = \bar{k}_1(b(n),n,a),  \quad k_2(n,a) = \bar{k}_2(b(n),n,a), \quad k(n,a) = \bar{k}(b(n),n,a),\\
g(n) = \bar{g}(b(n),n) \quad \text{and} \quad u(n,a) = \bar{u}(b(n),n,a).
\end{gather*}

Then it follows as in the proof of Theorem~\ref{thm-seqcont} that $g(n)$ is defined for all $n \in \omega$ with $F(y) \in B'_n$. Thus, $F(x_{\li(b(n))}) \in B'_n$, i.e., $x'_{f(\li(b(n)))} \in B'_n$, from which we obtain with the effective $T_3$ property that  $x'_{f(\li(b(n)))} \in B'_{s(f(\li(b(n))),n)}$. In addition, we have for $a \ge g(n)$ that either $F(x_{t(u(n,a))}) \in B'_n$, or $F(x_{t(u(n,a))}) \in T' \setminus B'_n$, in which case $F(x_{t(u(n,a))}) \in \ext(B_ {s(f(\li(b(n))),n)})$. Therefore, at least one of $k_1(n,a)$ and $k_2(n,a)$ must be defined, i.e., $k(n,a)$ is always defined. By induction on $a$ it follows that $u(n,a)$ is always defined as well.

Assume that for some $n, \bar{a} \in \omega$ with $F(y) \in B'_n$ and $\bar{a} \ge g(n)$, $k_2(n, \bar{a})$ is defined and $k(n, \bar{a}) =  k_2(n, \bar{a})$, and let $\bar{a}$ be minimal with this property. Then not the first but the second case in the definition of $u(n,a)$ comes into action. And once this case is active, it will remain active for ever, implying
\[
\varphi_{b(n)}(a) = \begin{cases}
                                     t(a) & \text{ if $a < \bar{a}$,}\\
                                     t(\bar{a}) & \text{ otherwise.}
                             \end{cases}
\]
With Property~\ref{prop-seq-3} of \textsc{Seq} it follows that $(x_{\varphi_{b(n)}(a)})_a$ is in \textsc{Seq}. Furthermore, the sequence is eventually constant. Because of Condition~\ref{df-la}(\ref{df-la-2}) we therefore have that $x_{\li(b(n))} = x_{t(\bar{a})}$. 

Note that $F(x_{\li(b(n))}) = x'_{f(\li(b(n)))}$ and $x'_{f(\li(b(n)))} \in B'_{s(f(\li(b(n))),n)}$, as we have already seen. On the other hand, as $k_2(n, \bar{a})$ is defined, we obtain that $t(\bar{a}) \in W_{w(\li(b(n)),n)}$, i.e., $F(x_{t(\bar{a})}) \in \ext(B'_{s(f(\li(b(n))),n)})$, a contradiction.

It follows for all $n \in \omega$ with $F(y) \in B'_n$ and $a \ge g(n)$ that $k(n,a)$ is defined, but $k(n,a) \not= k_2(n,a)$. Therefore, $k(n,a) = k_1(n,a)$, which implies that $k_1(n,a)$ is defined, $u(n,a) = a$ and hence $F(x_{t(a)}) \in B'_n$.
This shows that the sequence $(F(x_{t(a))})_a$ converges computably to $F(y)$.  In other words, $F$ is strongly effectively sequentially continuous.                                
\end{proof}

The two theorems in this section cover a large variety of cases. Unfortunately, however, we were not able to pursue our programme in full generality, i.e., to derive a theorem stating the effective sequential continuity of effective operators that would include all interesting cases, as we did in the continuous case. However, the present situation is also more complicated. In the continuous case we have one decision to make whether to follow a given sequence or to deviate. Now, we have to deal with infinitely many such decisions and the strategies how to make the decisions were quite different in the cases we considered. It is even not clear to us whether effective operators are effectively sequentially continuous in general, or whether an additional condition is needed.

\begin{proposition}\label{fried}
There is a constructive metric space $\mathcal{M}$,
a  constructive  domain $\mathcal{Q}$, and a map
$\fun{F}{M}{Q}$ which is effective, but not sequentially continuous.
\end{proposition}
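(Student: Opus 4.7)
The plan is to adapt Friedberg's 1958 example. For the spaces, take $\mathcal{M} = R^{(1)}$ with the Baire metric (a constructive metric space, Section~\ref{consmet}) and $\mathcal{Q} = P^{(1)}$ ordered by graph inclusion and equipped with its Scott topology (a constructive domain in which any G\"odel numbering $\varphi$ is admissible, Section~\ref{domains}). The restriction of $\varphi$ to indices of total functions is an acceptable numbering of $R^{(1)}$, and the canonical embedding $R^{(1)}\hookrightarrow P^{(1)}$ is effective with respect to these numberings.

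Following Friedberg, I would use the recursion theorem to construct a partial computable tracker $\psi\in P^{(1)}$ so that, on any index $e$ of a total function $f=\varphi_e$, $\psi(e)\conv$ and the partial computable function $\varphi_{\psi(e)}$ depends only on $f$, not on the particular index $e$. The standard Friedberg device is to let $\psi$, from input $e$, effectively search for further indices of the function $f$ and, upon finding one, trigger a fixed action whose self-referential definition (via the recursion theorem) forces different indices of the same $f$ to yield the same output while still allowing indices of genuinely different functions to diverge. Setting $F(\varphi_e)=\varphi_{\psi(e)}$ then produces a well-defined effective map $\fun{F}{M}{Q}$.

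To defeat sequential continuity, I would calibrate the construction so that there exist a computable sequence $(f_n)_n$ in $R^{(1)}$ converging in the Baire metric to some $f\in R^{(1)}$, a point $a\in\omega$ and distinct $c,c'\in\omega$ such that $F(f_n)(a)=c$ for every $n$ while $F(f)(a)=c'$. Then the basic Scott-open set $\set{g\in P^{(1)}}{g(a)\conv\wedge g(a)=c'}$ is a neighbourhood of $F(f)$ containing none of the $F(f_n)$, so $F(f_n)\not\to F(f)$ in $\mathcal{Q}$. The main obstacle is the usual Friedberg bookkeeping---totality of $\psi$ on total-function indices, extensionality of $F$, and the permanent finite disagreement between $F(f)$ and all the $F(f_n)$---which the recursion-theoretic self-reference can handle simultaneously; the genuine \emph{modification} of Friedberg's original argument is arranging the diagonal pattern to survive the transition from the Baire topology on $R^{(1)}$ to the coarser induced Scott topology on $P^{(1)}$, by locking the discrepancy onto a single fixed finite sub-function of the output.
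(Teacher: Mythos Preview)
Your proposal is a plan rather than a proof: you never actually define $\psi$, the sequence $(f_n)_n$, or verify extensionality and the promised disagreement. Invoking ``the standard Friedberg device'' and ``recursion-theoretic self-reference'' does not by itself produce these objects; in fact Friedberg's 1958 construction is a direct explicit definition, not a fixed-point argument via the recursion theorem, so your description of the device is also off. Without a concrete $F$ and a concrete sequence there is nothing to check.

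The paper's argument differs from your outline in two ways that make it both simpler and complete. First, for $\mathcal{Q}$ it takes Sierpinski space $\{\bot,0\}$ rather than $P^{(1)}$, so only a single effective predicate on $R^{(1)}$ is needed. This predicate is written down explicitly: $F(\varphi_i)=0$ iff either $\varphi_i(a)=0$ for all $a\le i$, or the least $c$ with $\varphi_i(c)\neq 0$ admits some $j<c$ with $\varphi_j(b)=\varphi_i(b)$ for all $b\le c$; extensionality is then checked by hand, no recursion theorem required. Second, the witnessing sequence $(g_m)_m$ converges to $\lambda n.\,0$ in the Baire metric but has $g_m(m{+}1)$ chosen, by diagonalisation, to exceed $\varphi_j(m{+}1)$ for every \emph{total} $\varphi_j$ with $j\le m$; this forces every G\"odel index of $g_m$ to lie above $m$, whence $F(g_m)=\bot$, while $F(\lambda n.\,0)=0$. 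Note that this $(g_m)_m$ is \emph{not} computable---and need not be, since the statement concerns ordinary (classical) sequential continuity, not effective sequential continuity. Your insistence on a computable $(f_n)_n$ makes the problem strictly harder; the paper remarks that a computable such sequence was supplied only later by Hoyrup.
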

\begin{proof}
The following construction is a modification of an example given by  Friedberg~(1958).
Let $\mathcal{M}$ be Baire space and $\mathcal{Q}$ Sierpinski space $\{\bot, 0\}$ with $\bot \sqsubseteq 0$, $\beta_0 = \bot$, and $\beta_{n+1} = 0$.
Moreover, set
\[
h(i) = \begin{cases}
                       1 &  \text{ if $[(\forall a \le i) \varphi_i(a) = 0] \vee (\exists c) [\varphi_i(c) \not= 0 \wedge (\forall a < c) \varphi_i(a) = 0 \wedge \mbox{}$}\\
                             &   (\exists j < c) (\forall b \le c)\varphi_i(b) = \varphi_j(b)],\\ 
                       \text{undefined} &  \text{ otherwise.}
           \end{cases} 
\]
Then $h \in P^{(1)}$. As it is readily verified, for all
$\varphi_i$, $\varphi_j \in R^{(1)}$ with $\varphi_i = \varphi_j$ one
has that $h(i) = h(j)$. Let $x$ be an admissible indexing of $Q$. Then
there is a function $d \in R^{(1)}$ such that for all $i \in \omega$
for which $\beta(W_i)$ is directed, $x_{d(i)}$ is the least upper
bound of $\beta(W_i)$. Let $q
\in R^{(1)}$ with $W_{q(i)} = \{ 0,  h(i) \}$, and set $t = d \circ q$. 
We define the effective mapping $\fun{F}{R^{(1)}}{Q}$ by $F(\varphi_i)
= x_{t(i)}$. Then $F(\varphi_i) = 0$, if the first condition  in the definition
of $h$ holds; otherwise, $F(\varphi_i) = \bot$.

Now, for $m \in \omega$, let $k_m = \max \{\, \varphi_i(m+1) + 1 \mid i \le m \wedge \varphi_i \in R^{(1)} \,\}$ and define
\[
g_m(a) = \begin{cases}
                             0 & \text{ if $ a \not= m+1$,} \\
                             k_m & \text{ otherwise.}
               \end{cases} 
\]
Then  $g_m \in R^{(1)}$, for every $m \in \omega$. Moreover, $(g_m)_m$ is a regular Cauchy sequence that converges computably to $\lambda n. 0$.  Since for any
G\"odel number $j$ of $g_m$ we have that $j >  m$ and as
$g_m(m+1) \not= 0$, it follows from the definition of $F$ that
$F(g_m) = \bot$, for all $m \in \omega$. On the other hand, $F(\lambda n. 0) = 0$.
Thus, $F$ cannot be sequentially continuous.
\end{proof}

However, this leaves still open the question whether effective operators are (strongly) effectively sequentially continuous in general. To decide this question negatively, one would need a computable sequence $(g_m)_m$ in the construction.

\paragraph{\textbf{Remark added in proof.}} After this paper was written, M.\ Hoyrup (2015) answered the above question negatively: In general, effective operators are \emph{not} effectively sequentially continuous.  Let $\fun{F}{M}{Q}$ be as in Proposition~\ref{fried} and for $m \in \omega$, set $g_m(a) = 1$, if $a = m+1$, and $g_m(a) = 0$, otherwise. Then $(g_m)_m$ is a computable regular Cauchy sequence computably converging to $\lambda n. 0$, but $(F(g_m))_m$ does not converge to $F(\lambda n. 0)$. Assume to the contrary that $0 \in \Lim_m F(g_m)$. Then there is a set $\set{j_m}{m \in \omega}$ of G\"odel numbers with finite complement so that $\varphi_{j_m}(a) = g_m(a)$, for $a \le m+1$. This is impossible as each $\varphi_{j_m}$ has infinitely many G\"odel numbers.

\section*{Acknowledgement}

The author is grateful to the anonymous referees for their careful reading of the man\-u\-script and the useful comments.

\end{document}